\newtheorem {theorem}{Theorem}[section]         \newtheorem {lemma}[theorem]{Lemma}     \newtheorem {definition}[theorem]{Definition}
     \newtheorem {remark}[theorem]{Remark}   
       \newtheorem {proposition}[theorem]{Proposition} 
\newcommand{\C}{\mathbb C}
\newcommand{\R}{\mathbb R}
\newcommand{\Di}{\mathbb{D}}
\newcommand{\Dr}{\mathbb{D}_R}
\newcommand{\gm}{\gamma}
\newcommand{\norm}[1]{\left\Vert#1\right\Vert}
\newcommand{\scal}[1]{\left<#1\right>}
\newcommand{\mes}{d\lambda}
\newcommand{\muar}{d\mu_{\alpha,R}}
\newcommand{\nuar}{d\mu_{\nu}}
\newcommand{\nuarexp}{e^{-\nu \vert z\vert^{2}}  \mes}
\newcommand{\LDo}{L^{2}(\Dr;\muar)}
\newcommand{\LC}{L^{2}(\C;\nuarexp)}
\newcommand{\HolD}{\mathcal{H}ol(\Di)}
\newcommand{\HolDr}{\mathcal{H}ol(\Dr)}
\newcommand{\HolC}{\mathcal{H}ol(\C)}
\newcommand{\ADm}{\mathcal{A}^{2,\alpha}_{R,m}(\Dr)}
\newcommand{\ADmm}{\mathcal{A}^{2,\alpha}_{R,m+1}(\Dr)}
\newcommand{\ADmo}{\mathcal{A}^{2,\alpha}_{R}(\Dr)}
\newcommand{\BDm}{\mathcal{B}^{2,\nu}_{m}(\C)}
\newcommand{\BDmm}{\mathcal{B}^{2,\nu}_{m+1}(\C)}
\newcommand{\BDmmo}{\mathcal{B}^{2,\nu}_{1}(\C)}
\newcommand{\BDmo}{\mathcal{B}^{2,\nu}(\C)}
\begin{document}

\begin{frontmatter}



\title{Weighted Bergman-Dirichlet and Bargmann-Dirichlet spaces of order $m$: Explicit formulae for reproducing kernels and asymptotic}
\author{A. El Hamyani}  \ead{amalelhamyani@gmail.com}
\author{A. Ghanmi}      \ead{ag@fsr.ac.ma}
\author{A. Intissar}    \ead{intissar@fsr.ac.ma}
\author{Z. Mouhcine}    \ead{zakariyaemouhcine@gmail.com}
\author{M. Souid El Ainin}   \ead{msouidelainin@yahoo.fr}
 \address{E.D.P. and Spectral Geometry,
          Laboratory of Analysis and Applications-URAC/03,
          Department of Mathematics, P.O. Box 1014,  Faculty of Sciences,
          Mohammed V University, Rabat, Morocco}

\begin{abstract}
We introduce new functional spaces that generalize the weighted Bergman and Dirichlet spaces on the disk $D(0,R)$ in the complex plane
 and the Bargmann-Fock spaces on the whole complex plane. We give a complete description of the considered spaces. Mainly, we are interested in giving
explicit formulas for their reproducing kernel functions and their asymptotic  behavior as $R$ goes to infinity.
\end{abstract}

\begin{keyword}
Weighted Bergman-Dirichlet spaces \sep  Weighted Bargmann-Dirichlet spaces \sep Reproducing kernel function \sep Hypergeometric function \sep Asymptotic behavior



\end{keyword}

\end{frontmatter}


\section{Introduction and  main results}
Let $\Di$ be the unit disk in the complex plane $\C$ and denote by $\mathcal{D}_\gm$; $\gm \in \R$, the functional space of all analytic functions $f(z) = \sum\limits_{n=0}^{+\infty} a_nz^n$
on $\Di$ such that its norm $\norm{f}_\gm$ is finite, where
$$
\norm{f}_\gm^{2} := \sum\limits_{n=0}^{+\infty} (n + 1)^\gm |a_n|^2 .
$$
 Thus for special values $\gm=-1$, $\gm=0$ and $\gm=1$  we have the Bergman, Hardy and Dirichlet space, respectively.
 More general, $\mathcal{D}_\gm$ is a weighted Bergman space when $\gm<0$ and a weighted Dirichlet space when $\gm>0$.
Such spaces play important roles in function theory and operator theory, as well as in modern analysis, probability and statistical analysis.
For a nice introduction and surveys of these spaces in the context function and operator theories, see \cite{RichterShields1988,RochbergWu1993,Wu1998,
ArcozziRochbergSawyerWick2011} and the references therein.

Added to the sequential characterization, the weighted Bergman space can be described differently. It can be realized as the
$(1-|z|^2)^\alpha\mes$ square integrable functions on $\Di$ that are holomorphic on $\Di$
\begin{align}\label{wBs}
\mathcal{A}^{2,\alpha}(\Di) := L^{2}\left(\Di; \left(1-\left|z\right|^{2}\right)^{\alpha}d\lambda\right) \cap \HolD,
\end{align}
where $\mes(z) = dx dy=\frac i2 dz\wedge d\bar z$ with $z=x+iy$; $x,y\in\R$,
 is the two dimensional Lebesgue area measure.
The corresponding reproducing kernel is known to be given through 
\begin{align} \label{RK-wBs}
K^{\alpha}(z,w)= \frac{(\alpha+1)}{\pi} \left( \frac{1}{1 -  z \overline{w}} \right)^{\alpha}.
\end{align}
While the classical Dirichlet space can be defined as the class of analytic functions $f(z) = \sum\limits_{n=0}^{+\infty} a_nz^n$ on $\Di$ for which
the semi-norm defined by the Dirichlet integral
$$ D(f) := \int_{\Di} |f'(z)|^2 \mes(z) = \sum\limits_{n=0}^{+\infty} n|a_n|^2 $$
is finite. A More convenient norm to use on the classical Dirichlet space  is the following
$$ \norm{f}_{1,0}^2 : =  |f(0)|^2 + \frac 1{\pi} \int_D |f'(z)|^2 \mes(z).$$
The reproducing kernel of the classical Dirichlet space with respect to this norm is known to be given by \cite{ArcozziRochbergSawyerWick2011}
\begin{align}\label{RKcD}
K(z;w) = \frac 1\pi \left(1 + \log\left(\frac{1}{1 - z\overline{w}}\right)\right); \quad z,w \in \Di.
\end{align}

In this paper, we intend to introduce and study two new classes of functional spaces on the disc $\Dr$ of radius $R$ as well as on $\C$, labeled by some fixed nonnegative integer $m$.
The first one on  $\Dr$ is denoted $\ADm$, $\alpha > -1$, and called weighted Bergman-Dirichlet space of order $m$. It generalizes the classical Bergman and Dirichlet spaces,
and consists of all convergent entire series on $\Dr$ whose the norm $\norm{\cdot}_{\alpha,m,R}$ defined through \eqref{Norm-m} below is finite.
Our main results concerning $\ADm$ are summarized in the following

\begin{theorem}\label{Mthm1}
The space $\ADm$ is non trivial if and only if $\alpha > -1$. In this case $\ADm$ is a reproducing kernel Hilbert space. Its reproducing kernel
is given explicitly in terms of the ${_3F_2}$-hypergeometric function as
\begin{align} \label{RK-m}
K^{\alpha}_{R,m}(z,w) = \frac{(\alpha+1)}{\pi R^{2}}  \left\{\sum\limits_{n=0}^{m-1}\frac{\Gamma(\alpha+2+n)}{\Gamma(n+1)\Gamma(\alpha+2)}\left(\frac{z\overline{w}}{R^{2}}\right)^{n}
+
 \frac{ (z\overline{w})^{m}}{(m!)^2} {_3F_2}\left( \begin{array}{c} 1,1 ,\alpha+2\\m+1,m+1\end{array}\bigg| \frac{z\overline{w}}{R^{2}} \right) \right\}.
\end{align}
Moreover, a function $f(z)=\sum\limits_{n=0}^{+\infty}a_{n}z^{n} $ on $\Dr$ belongs to $\ADm$ if and only if
   \begin{align}
\norm{f}_{\alpha,m,R}^{2} = \pi\left\{ \sum\limits_{n=0}^{m-1} \left(         \frac{n!\Gamma(\alpha+1)R^{2n+2}}{\Gamma(n+\alpha+2)}\right) |a_{n}|^{2}
+ \sum\limits_{n=m}^{\infty} \left(    \frac{(n!)^2\Gamma(\alpha+1) R^{2(n-m)+2}}{(n-m)!\Gamma(n-m+\alpha+2)}\right) |a_{n}|^{2} \right\}  < +\infty. \nonumber
  \end{align}
\end{theorem}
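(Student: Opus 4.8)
The plan is to reduce everything to the orthogonality of the monomials $z^n$ in the weighted space and a single Beta-function computation, and then to recognize the resulting power series as a $_3F_2$. Since the defining norm \eqref{Norm-m} is built from the rotation-invariant measure $\muar$, whose density $\muarexp$ depends only on $|z|$, the monomials are pairwise orthogonal: writing $z=re^{i\theta}$, the angular integral $\int_0^{2\pi} e^{i(n-k)\theta}\,d\theta$ vanishes unless $n=k$. Thus for $f(z)=\sum_n a_n z^n$ the norm diagonalizes as $\sum_n w_n|a_n|^2$, where $w_n$ is the squared norm of $z^n$: the plain weighted-monomial weight for the low indices $n<m$, and, for $n\ge m$, the same integral carrying the factor $(n!/(n-m)!)^2$ produced by the $m$-th derivative in the tail part of \eqref{Norm-m}. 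Each radial integral reduces, after the substitution $t=(r/R)^2$, to $\int_0^1 t^k(1-t)^\alpha\,dt = \Gamma(k+1)\Gamma(\alpha+1)/\Gamma(k+\alpha+2)$, and re-indexing $k=n$ (resp. $k=n-m$) reproduces exactly the two sums displayed in the stated norm characterization, which settles the \emph{Moreover} part.

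The non-triviality claim then falls out of the same computation: the Beta integral $\int_0^1 t^k(1-t)^\alpha\,dt$ converges and is strictly positive precisely when $\alpha>-1$, so all the weights $w_n$ are finite and positive in that range, whereas for $\alpha\le -1$ every $w_n=+\infty$ and the only function of finite norm is $0$. Hence $\ADm\ne\{0\}$ if and only if $\alpha>-1$, and in that regime the norm is a genuine weighted $\ell^2$ norm on the Taylor coefficients. Consequently $\ADm$ is a Hilbert space with orthonormal basis $e_n(z)=z^n/\sqrt{w_n}$. To get the reproducing kernel property I would estimate $w_n$: an elementary Stirling computation gives $w_n\sim c\,R^{2n} n^{2m-\alpha-1}$, so $\sum_n |z|^{2n}/w_n$ converges geometrically for $z\in\Dr$, point evaluations are bounded, and the kernel is the standard sum $K^{\alpha}_{R,m}(z,w)=\sum_n e_n(z)\overline{e_n(w)}=\sum_n (z\overline w)^n/w_n$, the series converging on $\Dr\times\Dr$ (indeed whenever $|z\overline w|<R^2$).

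It remains to put this series in closed form, which I would do by splitting the sum at $n=m$. The finite part $\sum_{n=0}^{m-1}(z\overline w)^n/w_n$ matches the polynomial term in \eqref{RK-m} after using $\Gamma(\alpha+2)=(\alpha+1)\Gamma(\alpha+1)$ to rewrite $1/w_n$. For the tail I would set $k=n-m$ and translate the gamma quotients into Pochhammer symbols via $(1)_k=k!$, $(\alpha+2)_k=\Gamma(\alpha+2+k)/\Gamma(\alpha+2)$ and $(m+1)_k=(k+m)!/m!$; the tail then becomes exactly $\frac{(\alpha+1)}{\pi R^2}\frac{(z\overline w)^m}{(m!)^2}\,{_3F_2}$ with the parameters in \eqref{RK-m}, and its radius of convergence $1$ in the variable $z\overline w/R^2$ is consistent with the domain $\Dr\times\Dr$. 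I expect this final identification — the careful bookkeeping that folds the gamma quotients into the hypergeometric normalization — to be the only delicate step; everything before it is a direct orthogonality-plus-Beta-integral computation.
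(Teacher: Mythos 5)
Your proposal is correct and follows essentially the same route as the paper: orthogonality of the monomials from rotation invariance of $\muar$, the Beta-integral evaluation of $\norm{z^n}^2_{\alpha,m,R}$ (finite precisely when $\alpha>-1$), the diagonal sum $\sum_n (z\overline w)^n/w_n$ for the kernel, and the Pochhammer bookkeeping that produces the ${_3F_2}$. The only minor divergence is that you justify completeness and the boundedness of point evaluations by Stirling asymptotics of the weights $w_n$, whereas the paper routes these through a continuous embedding of $\ADm$ into the weighted Bergman space $\mathcal{A}^{2,\alpha}_{R,0}(\Dr)$ together with a Cauchy--Schwarz estimate; both arguments are sound.
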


\begin{remark}\label{Rem-1}
\begin{enumerate}
\item The special case of $R=1$ and $m=0$ leads to the weighted Bergman space \eqref{wBs}. In this case, the expression  of the reproducing kernel in \eqref{RK-m} reduces further to the Bergman reproducing kernel \eqref{RK-wBs}.

\item For $R=1$, $\alpha=0$ and $m=1$, the corresponding space is the classical Dirichlet space. In this case
the expression  of the reproducing kernel \eqref{RK-m} reduces further to the reproducing kernel given through \eqref{RKcD} of the classical Dirichlet space.
\end{enumerate}
\end{remark}

What we do in the construction of $\ADm$ works mutatis mutandis to introduce and study their analogues on the whole complex plane $\C$, the Bargmann-Dirichlet spaces $\BDm$ of order $m$  (see Section 3). The following is the analogue of Theorem \ref{Mthm1} for these spaces.

\begin{theorem}\label{Mthm2}
The space $\BDm $ is a reproducing kernel Hilbert space.
   Its reproducing kernel function is given in terms of the ${_2F_2}$-hypergeometric function by
\begin{align}\label{RK-BDC} K_{m}^{\nu}(z,w)=\frac{\nu}{\pi}\left(
  \sum\limits_{k=0}^{m-1}\frac{(\nu z \overline{w})^{k}}{k!}+\frac{(z \overline{w})^{m}}{\Gamma^{2}(m+1)} \,
   {_2F_2}\left(\begin{array}{c} 1, 1\\ m+1,m+1 \end{array}\bigg |  \nu z \overline{w}\right)\right) .
   \end{align}
   Moreover, a function $f(z)=\sum\limits_{n=0}^{+\infty}a_{n}z^{n} $ belongs to $\BDm$ if and only if
   \begin{align} \label{normefc}
     \norm{f}_{\nu,m}^{2}=\pi \left\{ \sum\limits_{n=0}^{m-1}
     \left(\frac{n!}{\nu^{n+1}}\right)\mid a_{n}\mid ^{2} +\sum\limits_{n=m}^{+\infty}\left( \frac{(n!)^{2}}{\nu^{n-m+1}(n-m)!}\right)\mid a_{n}\mid ^{2} \right\}<+\infty .
   \end{align}
\end{theorem}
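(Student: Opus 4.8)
The plan is to run the argument of Theorem~\ref{Mthm1} in the Gaussian setting, using that the construction of $\BDm$ makes the monomials $z^n$, $n\ge 0$, pairwise orthogonal. First I would compute the squared norms $\norm{z^n}_{\nu,m}^2$ explicitly. For $n<m$ these are plain Gaussian moments $\int_\C |z|^{2n}\nuarexp$, which in polar coordinates and after the substitution $s=\nu r^2$ collapse to the Gamma integral $\pi n!/\nu^{n+1}$. For $n\ge m$ the $m$-th derivative enters, so the relevant quantity is $(n!/(n-m)!)^2\int_\C |z|^{2(n-m)}\nuarexp=\pi (n!)^2/(\nu^{n-m+1}(n-m)!)$. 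These two regimes are exactly the weights appearing in \eqref{normefc}. Since the resulting norm is a diagonal weighted $\ell^2$ norm on the Taylor coefficients, the coefficient map $f=\sum a_n z^n\mapsto (a_n)$ is an isometry onto a weighted sequence space; hence $\BDm$ is complete, the membership criterion \eqref{normefc} holds, and $\{z^n\}$ is a complete orthogonal system.

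Next I would extract the reproducing kernel. Normalizing, $e_n:=z^n/\norm{z^n}_{\nu,m}$ is an orthonormal basis, so the natural candidate is $K_m^\nu(z,w)=\sum_{n\ge 0} e_n(z)\overline{e_n(w)}=\sum_{n\ge 0}(z\overline{w})^n/\norm{z^n}_{\nu,m}^2$. To legitimize this, equivalently to see that the evaluation functionals are bounded, I would verify that $\sum_{n\ge 0}|z|^{2n}/\norm{z^n}_{\nu,m}^2$ converges for every $z\in\C$. With the $n\ge m$ weights the general term is $\nu^{n-m+1}(n-m)!\,|z|^{2n}/(\pi (n!)^2)$, comparable up to polynomial factors to $\nu^n|z|^{2n}/n!$, so the series converges and defines an entire function of $z\overline{w}$. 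This shows that point evaluation is continuous and that $\BDm$ is a reproducing kernel Hilbert space whose kernel is given by the above series.

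Finally I would put the kernel series in closed form by splitting it at $n=m$. The head $\sum_{n=0}^{m-1}(\nu^{n+1}/(\pi n!))(z\overline{w})^n$ is immediately $\tfrac{\nu}{\pi}\sum_{k=0}^{m-1}(\nu z\overline{w})^k/k!$. For the tail I would set $k=n-m$ and rewrite the factorials through Pochhammer symbols via $(k+m)!=m!\,(m+1)_k$ and $k!=(1)_k$, turning it into $\tfrac{\nu(z\overline{w})^m}{\pi(m!)^2}\sum_{k\ge 0}\tfrac{(1)_k(1)_k}{((m+1)_k)^2}\tfrac{(\nu z\overline{w})^k}{k!}$, which is by definition $\tfrac{\nu(z\overline{w})^m}{\pi\,\Gamma^2(m+1)}\,{_2F_2}(1,1;m+1,m+1;\nu z\overline{w})$. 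Summing the two pieces reproduces \eqref{RK-BDC}. I expect the only genuinely analytic point to be the convergence estimate underpinning the reproducing kernel property over the non-compact plane $\C$; the norm computation and the passage to the ${_2F_2}$ are the same bookkeeping as in the disk case, with the Beta integral of Theorem~\ref{Mthm1} replaced by its Gaussian (Gamma) counterpart.
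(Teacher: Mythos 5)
Your proposal is correct and follows essentially the same route as the paper: Gaussian moment integrals for the monomial norms $\norm{z^n}_{\nu,m}^2$, orthogonality giving the diagonal $\ell^2$ description \eqref{normefc}, boundedness of point evaluation via convergence of $\sum_n |z|^{2n}/\norm{z^n}_{\nu,m}^2$, and the reindexing $k=n-m$ with Pochhammer identities to recognize the ${_2F_2}$. The only minor variation is that you obtain completeness from the isometry with a weighted sequence space (which tacitly requires checking that finite weighted norm forces infinite radius of convergence, immediate from your weight asymptotics), whereas the paper deduces it from the continuous embedding of $\BDm$ into the Bargmann--Fock space.
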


\begin{remark}\label{Rem-2}
For $m=0$, the space $\BDm $ is the Bargmann-Fock space,  consisting of holomorphic functions on $\C$ that are $e^{\nu|z|^2}\mes$-square integrable.
Its reproducing kernel function is known to be given by 
  $$ K^{\nu}(z,w)=\left(\frac{\nu}{\pi}\right) e^{\nu z \overline{w}}.$$
\end{remark}

Motivated by the fact that the flat hermitain geometry on $\C$
can be approximated by the complex hyperbolic geometry of the disks $\Dr$ of radius $R>0$
associated to an appropriate scaled Bergman K\"ahler metric \cite{GhIn2005JMP}  (see Section 4),
we show that the spaces $\BDm$, with $\alpha = \nu R^{2}$, can be seen as the limit of the spaces $\ADm$ as $R$ goes to infinity, in the sense that we have

\begin{theorem}\label{Mthm3}
For every fixed nonegative integer $m$, the reproducing kernel
 $K^{\nu R^2}_{R,m}(z,w)$ of the weighted Bergman-Dirichlet space $\ADm$ converges
 pointwisely and uniformly on compact sets of $\C\times \C$ to the reproducing kernel function of
weighted Bargmann-Dirichlet space $\BDm$.
\end{theorem}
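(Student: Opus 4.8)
\textit{Proof plan.} The plan is to bypass the closed hypergeometric forms entirely and compare the two kernels coefficient by coefficient as power series in the single variable $t=z\overline{w}$. Since $\{z^{n}\}_{n\geq 0}$ is an orthogonal basis of each space, the reproducing kernels are diagonal, namely $K^{\alpha}_{R,m}(z,w)=\sum_{n\geq 0}\kappa_{n}^{(R)}(z\overline{w})^{n}$ and $K^{\nu}_{m}(z,w)=\sum_{n\geq 0}\kappa_{n}^{(\infty)}(z\overline{w})^{n}$, where $\kappa_{n}^{(R)}$ and $\kappa_{n}^{(\infty)}$ are the reciprocals of the squared norms of $z^{n}$ read off from the norm formulas in Theorems \ref{Mthm1} and \ref{Mthm2}. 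Concretely, for $n\geq m$ and $s:=n-m+1$, setting $\alpha=\nu R^{2}$,
\[
\kappa_{n}^{(R)}=\frac{1}{\pi}\,\frac{(n-m)!}{(n!)^{2}}\,\frac{\Gamma(\alpha+s+1)}{\Gamma(\alpha+1)\,R^{2s}},\qquad \kappa_{n}^{(\infty)}=\frac{1}{\pi}\,\frac{(n-m)!\,\nu^{n-m+1}}{(n!)^{2}},
\]
with an analogous but simpler pair of formulas for the finitely many indices $0\leq n<m$ (there $s=n+1$). One checks directly that these expansions agree with \eqref{RK-m} and \eqref{RK-BDC}.

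First I would establish the coefficientwise convergence $\kappa_{n}^{(R)}\to\kappa_{n}^{(\infty)}$ as $R\to\infty$. This is immediate from the finite-product factorization
\[
\frac{\Gamma(\alpha+s+1)}{\Gamma(\alpha+1)\,R^{2s}}=\prod_{j=1}^{s}\frac{\alpha+j}{R^{2}}=\prod_{j=1}^{s}\Big(\nu+\frac{j}{R^{2}}\Big)\xrightarrow[R\to\infty]{}\nu^{s},
\]
each factor tending to $\nu$; the indices $n<m$ are handled identically. Since the polynomial part $0\leq n\leq m-1$ is a finite sum of monomials, this already yields its uniform convergence on compacta.

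To upgrade this to uniform convergence of the full series on a compact set $K\subset\C\times\C$, I would invoke the dominated convergence theorem for series (Tannery's theorem). Put $\rho:=\sup_{(z,w)\in K}|z\overline{w}|<\infty$. Using $\prod_{j=1}^{s}(\alpha+j)\leq(\alpha+s)^{s}$ and $(n-m)!/(n!)^{2}\leq 1/n!$, one obtains, for every $R\geq R_{0}$,
\[
\kappa_{n}^{(R)}\,\rho^{n}\leq \frac{\rho^{n}}{\pi\,n!}\Big(\nu+\frac{n-m+1}{R_{0}^{2}}\Big)^{\,n-m+1}=:M_{n},
\]
the right-hand side being decreasing in $R$, so that $R_{0}$ can be fixed once and for all; passing to the limit also gives $\kappa_{n}^{(\infty)}\rho^{n}\leq M_{n}$. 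A ratio test combined with Stirling's formula shows $M_{n+1}/M_{n}\to e\rho/R_{0}^{2}$, hence $\sum_{n}M_{n}<\infty$ as soon as $R_{0}^{2}>e\rho$. Then from
\[
\sup_{(z,w)\in K}\big|K^{\nu R^{2}}_{R,m}(z,w)-K^{\nu}_{m}(z,w)\big|\leq\sum_{n\geq 0}\big|\kappa_{n}^{(R)}-\kappa_{n}^{(\infty)}\big|\,\rho^{n},
\]
the domination $|\kappa_{n}^{(R)}-\kappa_{n}^{(\infty)}|\rho^{n}\leq 2M_{n}$ together with the coefficientwise convergence lets Tannery's theorem force the right-hand side to $0$, giving uniform (hence pointwise) convergence on $K$.

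I expect the main obstacle to be precisely this uniform domination in the tail $n\geq m$: the coefficients $\kappa_{n}^{(R)}$ behave like $\nu^{n}$ as long as $n\lesssim \nu R^{2}$ but acquire factorial-type growth once $n\gg \nu R^{2}$, so any crude bound on the Pochhammer symbol (for instance $\prod_{j=1}^{s}(\alpha+j)\leq(\alpha+1)^{s}\,s!$) destroys the compensating $1/n!$ and produces a merely geometric, non-summable majorant. The remedy is to use the sharper bound $(\alpha+s)^{s}$ and to absorb the resulting $(n/R_{0}^{2})^{n}$ growth against $1/n!$ through Stirling, which is exactly what pins the admissible threshold $R_{0}^{2}>e\rho$ to the size of the compact set $K$.
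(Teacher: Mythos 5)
Your proof is correct. The paper establishes Theorem \ref{Mthm3} by a related but differently packaged route: it substitutes $\alpha=\nu R^{2}$ into the closed forms \eqref{RK-m} and \eqref{RK-BDC} and reduces the whole statement to Lemma \ref{LemLimitHyp}, namely that ${_{p+1}F_p}$ with one upper parameter $c+\rho$ and argument $\xi/\rho$ tends to ${_pF_p}$ at $\xi$ uniformly on compacta; the proof of that lemma is not written out (it is said to be checkable ``in a formal way'', with a rigorous argument deferred to \cite{GhIn2005JMP} for $p=1$). You instead bypass the hypergeometric notation and work directly with the diagonal coefficients $1/\norm{e_n}_{\alpha,m,R}^{2}$, proving termwise convergence from the finite-product factorization $\prod_{j=1}^{s}\left(\nu+j/R^{2}\right)\to\nu^{s}$ and then justifying the interchange of limit and sum by Tannery's theorem with the explicit majorant $M_n$, summable once $R_0^{2}>e\rho$. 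This is the same termwise-limit idea at heart, but your version is self-contained and supplies precisely the uniform domination that the paper's ``formal'' argument leaves implicit; in effect you have proved the relevant special case of Lemma \ref{LemLimitHyp}. Your diagnosis of the delicate point is also accurate: a crude bound such as $\prod_{j=1}^{s}(\alpha+j)\leq(\alpha+1)^{s}s!$ destroys the compensating $1/n!$ and yields a non-summable majorant, whereas $(\alpha+s)^{s}$ combined with Stirling pins the threshold $R_0^{2}>e\rho$; and since only the limit $R\to\infty$ matters, restricting to $R\geq R_0$ (which also ensures the compact set eventually sits inside $\Dr\times\Dr$ where the kernel is defined) costs nothing.
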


The paper is organized as follows. In the succeeding sections (Sections 2 and 3), we discuss the proofs of our main results, Theorems \ref{Mthm1} and  \ref{Mthm2}, stated in this introductory section. Moreover, we give a complete description of the considered Hilbert spaces $\ADm$ and $\BDm$, including the explicit formulae for their reproducing kernel functions.
In Section 4, we show that the $L^2$-eigenprojector kernel of $\ADm$ on $\Dr$ gives rise to its analogue of $\BDm$ on $\C$ by letting $R$ tends to infinity.

\section{Weighted Bergman-Dirichlet spaces of order $m$ on the disk $\Dr$}

Denote by $\Dr=\{z\in \C; \, |z|<R\}$ the disk of radius $R>0$ in the complex plane $\C$.
For given $\alpha\in\R$, let  $\LDo:=L^{2}(\Dr; d\mu_{\alpha,R})$ be the space of complex valued functions on
$\Dr$ that are square-integrable with respect to the density measure
\begin{align}\label{measure}
\muar (z)=\left(1-\left|\frac{z}{R}\right|^{2}\right)^{\alpha}\mes(z), 
\end{align}
$\mes$ being the two dimensional Lebesgue area measure on $\C$.
 The space $\LDo$ is a Hilbert space in the norm
 \begin{align}\label{FreeNorm}
\|f \|^{2}_{\alpha,R}:= \int_{\Dr}|f(z)|^{2} \, \muar(z)
\end{align}
corresponding to the hermitian scaler product
\begin{align}\label{FreeSP}
\scal{ f , g}_{\alpha,R}:= \int_{\Dr} f(z) \overline{g(z)}  \muar(z).
\end{align}
By $\HolDr$, we denote the vector space of all convergent entire series $f(z)=\sum\limits_{n=0}^{+\infty} a_n z^n$ on $\Dr$.
Note that, for a given arbitrary nonnegative integer $m=0,1,2, \cdots$, we can split any $f \in \HolDr$ as
\begin{align}\label{Split-m}
 f(z) = f_{1,m}(z) + f_{2,m}(z) ,
 \end{align}
 where
$$ f_{1,m}(z) := \sum\limits_{n=0}^{m-1} a_n z^n  \quad \mbox{and} \quad f_{2,m}(z) := \sum\limits_{n=m}^{+\infty} a_n z^n = f(z) - f_{1,m}(z)$$
so that $f^{(m)}= f_{2,m}^{(m)}$, with the convention that $ f_{1,0}(z) =0$ when $m=0$.
Thus for any fixed nonnegative integer $m$, we consider the functional space $\ADm$ of all $f \in \HolDr$ such that
\begin{align} \label{Norm-m}
\norm{f}_{\alpha,m,R}^2 : = \norm{ f_{1,m} }_{\alpha,R}^2 + \norm{ f_{2,m}^{(m)}}_{\alpha,R}^2 < +\infty.
\end{align}
We denote by $\scal{ , }_{\alpha,m,R}$ the associated hermitian scalar product defined by
\begin{align} \label{SP-m}
\scal{ f , g}_{\alpha,m,R} : = \scal{ f_{1,m}  ,  g_{1,m} }_{\alpha,R} + \scal{ f_{2,m}^{(m)} , g_{2,m}^{(m)}}_{\alpha,R}
\end{align}
for given $f,g\in \ADm$.

The aim of this section is to give a concrete description of $\ADm$ and prove Theorem \ref{Mthm1}. We begin with the following
\begin{lemma}\label{lem-orth-norm}
Keep notations as above.
\begin{enumerate}
  \item[(i)] The monomials $e_{n}(z)=z^n$ are pairwise orthogonal with respect to
the hermitian scalar product $\scal{ , }_{\alpha,m,R}$ in \eqref{SP-m}.
  \item[(ii)] The monomials $e_{n}(z)=z^n$ belong to $\ADm$ if and only if $\alpha>-1$.
  \item[(iii)] For $\alpha>-1$, we have
   \begin{align} \label{nk-sp}
\norm{ e_{n}}_{\alpha,m,R}^2
= \pi \left\{ \begin{array}{ll} R^{2n+2}        \frac{n!\Gamma(\alpha+1)}{\Gamma(n+\alpha+2)}         & \quad\mbox{for } \, n < m \\
     \\R^{2(n-m)+2}    \frac{(n!)^2\Gamma(\alpha+1)}{(n-m)!\Gamma(n-m+\alpha+2)} & \quad \mbox{for } \, n\geq m
\end{array} \right. .
 \end{align}
\end{enumerate}
\end{lemma}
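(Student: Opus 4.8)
The plan is to reduce everything to the behaviour of a single monomial $e_n=z^n$ under the splitting \eqref{Split-m}, and from there to one radial integral. The key observation is that a monomial sits entirely on one side of the splitting: if $n<m$ then $(e_n)_{1,m}=z^n$ and $(e_n)_{2,m}=0$, whereas if $n\geq m$ then $(e_n)_{1,m}=0$, $(e_n)_{2,m}=z^n$ and $(e_n)_{2,m}^{(m)}=\frac{n!}{(n-m)!}z^{n-m}$. Substituting this into the definition \eqref{SP-m} of $\scal{\cdot,\cdot}_{\alpha,m,R}$ collapses each $\scal{e_n,e_k}_{\alpha,m,R}$ into a (possibly scaled) free inner product $\scal{z^p,z^q}_{\alpha,R}$ of two monomials, so the whole lemma rests on evaluating
\[ \scal{z^p,z^q}_{\alpha,R} = \int_{\Dr} z^p \overline{z}^q \, \muar(z). \]

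First I would pass to polar coordinates $z=re^{i\theta}$ and separate the angular and radial integrations. The angular factor $\int_0^{2\pi} e^{i(p-q)\theta}\,d\theta = 2\pi\,\delta_{p,q}$ immediately forces $\scal{z^p,z^q}_{\alpha,R}=0$ whenever $p\neq q$. To deduce part (i), I would run the short case analysis dictated by the observation above: when $n,k<m$ (resp.\ $n,k\geq m$) the inner product reduces to $\scal{z^n,z^k}_{\alpha,R}$ (resp.\ to a multiple of $\scal{z^{n-m},z^{k-m}}_{\alpha,R}$), which vanishes for $n\neq k$ by the angular computation; and when exactly one of $n,k$ is $<m$, each of the two terms in \eqref{SP-m} already contains a vanishing factor, so $\scal{e_n,e_k}_{\alpha,m,R}=0$ as well. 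This mixed case is the only place where one must be careful to invoke the splitting correctly.

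For parts (ii) and (iii) I would evaluate the diagonal radial integral. Taking $p=q=k$ gives
\[ \norm{z^k}_{\alpha,R}^2 = 2\pi \int_0^R r^{2k+1}\left(1-\frac{r^2}{R^2}\right)^{\alpha} dr, \]
and the substitution $t=(r/R)^2$ turns this into a Beta integral,
\[ \norm{z^k}_{\alpha,R}^2 = \pi R^{2k+2}\int_0^1 t^k (1-t)^{\alpha}\,dt = \pi R^{2k+2}\,\frac{k!\,\Gamma(\alpha+1)}{\Gamma(k+\alpha+2)}. \]
The integral $\int_0^1 t^k(1-t)^{\alpha}\,dt$ is finite precisely when $\alpha+1>0$, i.e.\ $\alpha>-1$, and diverges for $\alpha\leq -1$; this gives part (ii) in both directions. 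Feeding this evaluation back through the observation yields (iii): for $n<m$ one reads off $\norm{e_n}_{\alpha,m,R}^2=\norm{z^n}_{\alpha,R}^2$, while for $n\geq m$ one has $\norm{e_n}_{\alpha,m,R}^2=\left(\frac{n!}{(n-m)!}\right)^2\norm{z^{n-m}}_{\alpha,R}^2$, and inserting the Beta evaluation with $k=n-m$ and simplifying produces the two cases of \eqref{nk-sp}. The argument is entirely computational; I do not expect a genuine obstacle, only the bookkeeping of keeping the factor $\frac{n!}{(n-m)!}$ from the $m$-th derivative aligned with the shifted index $n-m$ appearing in the Gamma factors.
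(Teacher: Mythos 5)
Your proposal is correct and follows essentially the same route as the paper: the same three-way case analysis on the splitting \eqref{Split-m}, the same reduction to the free inner product of monomials via polar coordinates (the angular integral giving $\delta_{p,q}$), and the same Beta-function evaluation of the radial integral to settle both the finiteness criterion $\alpha>-1$ and the explicit norms in \eqref{nk-sp}.
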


\begin{proof} For (i), we distinguish three cases. Indeed, we have
\begin{align}
\scal{ e_{n} , e_{k} }_{\alpha,m,R}
&= \left\{ \begin{array}{ll}
\scal{ e_{n} , e_{k} }_{\alpha,R}          & \quad \mbox{for } \, n,k < m \\
\\
0 & \quad \mbox{for } \, n<m, \, k \geq m\\
\\
\frac{n!k!}{(n-m)!(k-m)!} \scal{ e_{n-m} , e_{k-m} }_{\alpha,R} & \quad \mbox{for } \,n,k \geq m
\end{array} \right.  . \label{int-sp1}
 \end{align}
This reduces further to the computation of $\scal{ e_{n} , e_{k}}_{\alpha,R}$, which can be handled using polar coordinates $z=r Re^{i\theta}$ with
$r\in [0,1[$ and $\theta\in [0,2\pi[$. Thus, we have
\begin{align*}
\scal{ e_{n} , e_{k}}_{\alpha,R}
& = \int_{\Dr} z^n \overline{z}^k  \left(1-\left|\frac{z}{R}\right|^{2}\right)^{\alpha}d\lambda(z)\\
& = \int_{[0,1[\times[0,2\pi[} \left(rR\right)^{n+k} e^{i(n-k)\theta}   (1-r^{2})^{\alpha}rR^2drd\theta.
\end{align*}
By means of Fubini-Tonelli theorem, we get
 \begin{align}
\scal{ e_{n} , e_{k}}_{\alpha,R}
& =  R^{n+k+2} \int_0^1  r^{n+k+1} (1-r^{2})^{\alpha}\left(\int_0^{2\pi} e^{i(n-k)\theta} d\theta \right) dr \nonumber\\
& =  2\pi R^{n+k+2} \left(\int_0^1  r^{n+k+1} (1-r^{2})^{\alpha} dr \right) \delta_{n,k} . \label{toOrth}
 \end{align}
 Whence in view of \eqref{int-sp1}, we conclude that $\scal{ e_{n} , e_{k}}_{\alpha,m,R}=0$  for $n\ne k$.

 The proof of (ii) follows by taking $k=n$ in \eqref{toOrth} and next making use of the change $t=r^2$. Indeed, we obtain
 \begin{align*}
\norm{ e_{n} }_{\alpha,R}^2  = \pi R^{2n+2}  \int_0^1  t^{n} (1-t)^{\alpha}dt  .
 \end{align*}
The involved integral is then a special case of the well known Euler Beta function \cite[p. 18]{Rainville71}
$$B(x,y):=\int_{0}^{1}t^{x-1}\left(1-t\right)^{y-1} dt = \frac{\Gamma(x)\Gamma(y)}{\Gamma(x+y)} $$
 provided that $\Re(x)>0$ and $\Re(y)>0$.
 Therefore, the norm $\norm{ e_{n} }_{\alpha,m,R}$ is finite if and only if $\alpha > -1$.
 In this case, we have
 \begin{align*}
\norm{ e_{n}}_{\alpha,R}^2 = \pi R^{2n+2}  \frac{\Gamma(n+1)\Gamma(\alpha+1)}{\Gamma(n+\alpha+2)}.
\end{align*}
By substituting this in \eqref{int-sp1}, it follows
\begin{align*}
\norm{ e_{n}}_{\alpha,m,R}^2
= \left\{ \begin{array}{ll}
\pi R^{2n+2}        \frac{n!\Gamma(\alpha+1)}{\Gamma(n+\alpha+2)}         & \quad n < m \\
\\
\pi R^{2(n-m)+2}    \frac{(n!)^2\Gamma(\alpha+1)}{(n-m)!\Gamma(n-m+\alpha+2)} & \quad n\geq m
\end{array} \right. .
 \end{align*}
 Thus the proof is completed.
\end{proof}

The first main result of this section is the following
\begin{lemma}\label{lem-growth}
The space $\ADm$ is nontrivial if and only if $\alpha > -1 $. In this case, a function
$f(z)= \sum\limits_{n=0}^{\infty}a_{n}z^{n}$ belongs to $\ADm$ if and only if  $(a_n)_n$ satisfies the growth condition
$\sum\limits_{n=0}^{\infty}  |a_{n}|^{2} \norm{e_{n}}_{\alpha,m,R}^{2} < +\infty ,$
which reads explicitly as ,
 \begin{align}
\norm{f}_{\alpha,m,R}^{2} = \pi\left\{ \sum\limits_{n=0}^{m-1} \left(         \frac{n!\Gamma(\alpha+1)R^{2n+2}}{\Gamma(n+\alpha+2)}\right) |a_{n}|^{2}
+
 \sum\limits_{n=m}^{\infty} \left(    \frac{(n!)^2\Gamma(\alpha+1) R^{2(n-m)+2}}{(n-m)!\Gamma(n-m+\alpha+2)}\right) |a_{n}|^{2} \right\}
  < +\infty. 
 \end{align}
\end{lemma}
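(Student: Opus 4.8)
The plan is to leverage Lemma~\ref{lem-orth-norm} directly, since it has already done the computational heavy lifting. The statement to prove is essentially a bookkeeping consequence of orthogonality together with the explicit norms $\norm{e_n}_{\alpha,m,R}^2$ computed in \eqref{nk-sp}. First I would address the nontriviality dichotomy: by Lemma~\ref{lem-orth-norm}(ii), each monomial $e_n$ lies in $\ADm$ precisely when $\alpha>-1$. If $\alpha\le -1$, then every $e_n$ has infinite norm, and since the norm \eqref{Norm-m} is built from the integrals \eqref{FreeNorm}, any nonzero $f(z)=\sum a_n z^n$ with some $a_n\ne 0$ would force $\norm{f}_{\alpha,m,R}^2=+\infty$ (the splitting \eqref{Split-m} and the diagonal structure prevent cancellation), so the space collapses to $\{0\}$. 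Conversely, for $\alpha>-1$ the space contains all polynomials and is nontrivial.

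For the main characterization, assume $\alpha>-1$. The key step is to expand $\norm{f}_{\alpha,m,R}^2$ using the definition \eqref{Norm-m} and bilinearity of the scalar product \eqref{SP-m}. Writing $f=\sum_n a_n e_n$ and using the orthogonality from Lemma~\ref{lem-orth-norm}(i), all cross terms $\scal{e_n,e_k}_{\alpha,m,R}$ with $n\ne k$ vanish, so the norm diagonalizes:
\begin{align}
\norm{f}_{\alpha,m,R}^2 = \sum_{n=0}^{+\infty} |a_n|^2 \norm{e_n}_{\alpha,m,R}^2. \nonumber
\end{align}
Substituting the piecewise values of $\norm{e_n}_{\alpha,m,R}^2$ from \eqref{nk-sp}—splitting the sum at $n=m$ according to whether $n<m$ or $n\ge m$—yields exactly the displayed two-sum expression. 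The finiteness of this quantity is by definition the condition $f\in\ADm$, which establishes both directions of the ``if and only if.''

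The one point requiring genuine care is justifying the passage to the diagonal series for an \emph{infinite} sum, rather than merely a finite linear combination. Here I would note that the convention \eqref{Split-m} cleanly separates $f$ into the finite polynomial part $f_{1,m}$ and the tail $f_{2,m}$, and that differentiation acts term-by-term on the convergent entire series $f_{2,m}$ on $\Dr$. Because the norm \eqref{Norm-m} is a sum of two genuine Hilbert-space norms coming from \eqref{FreeNorm} and the monomials are mutually orthogonal in each, the monotone convergence of partial sums of nonnegative terms guarantees that $\norm{f}_{\alpha,m,R}^2$ equals the (possibly infinite) series $\sum_n |a_n|^2\norm{e_n}_{\alpha,m,R}^2$, with equality and finiteness holding simultaneously. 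I expect this interchange of the norm with the infinite summation to be the only subtle step; everything else is a direct substitution from the preceding lemma.
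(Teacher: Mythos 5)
Your proposal is correct and follows essentially the same route as the paper: both reduce the lemma to Lemma~\ref{lem-orth-norm}, use orthogonality of the monomials to diagonalize the norm as $\sum_n |a_n|^2\norm{e_n}_{\alpha,m,R}^2$, and read off the nontriviality dichotomy from the finiteness of $\norm{e_n}_{\alpha,m,R}^2$ (you argue the contrapositive where the paper extracts a single index $n_0$ with $a_{n_0}\ne 0$, which is the same argument). Your explicit justification of the interchange of the integral norm with the infinite sum via nonnegativity and monotone convergence is a point the paper leaves implicit, but it is not a different method.
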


\begin{proof}
Lemma \ref{lem-orth-norm} shows that the monomials $ z^n $ belong to $\ADm$ under the assumption $\alpha>-1$.
For the converse, assume that $\ADm$ is nontrivial and pick a nonzero function $f(z)= \sum\limits_{n=0}^{\infty}a_{n}z^n$ such that
$\norm{f}_{\alpha,m,R}^{2} <+\infty$. Therefore, according to $(i)$ of Lemma \ref{lem-orth-norm}, we get
$$\norm{f}_{\alpha,m,R}^{2} 
   =  \sum\limits_{n=0}^{\infty} |a_{n}|^{2} \norm{e_{n}}_{\alpha,m,R}^{2} <+\infty.$$
This implies that $|a_{n}|^{2} \norm{e_{n}}_{\alpha,m,R}^{2}  <+\infty $ for every $n$ and in particular for certain $n_0$ for which $a_{n_0}\ne 0$. Thus from (ii) of Lemma \ref{lem-orth-norm}, we deduce that $\alpha > -1$.
\end{proof}

\begin{remark} For $R=1$ and $\alpha=0$, the spaces $\ADm$ corresponding to $m=0$ and $m=1$ can be identified respectively to
$$ \left\{(a_n)_{n\geq 0} \subset \C ; \, \sum\limits_{n=1}^{+\infty} \frac{|a_n|^2}{n+1} <+\infty \right\}
\quad \mbox{ and } \quad
   \left\{(a_n)_{n\geq 1} \subset \C ; \, \sum\limits_{n=1}^{+\infty} n |a_n|^2 <+\infty \right\}$$
which are respectively the sequential characterization of the classical Bergman and Dirichlet spaces.
\end{remark}

\begin{definition}\label{Def-1}
We will call $\ADm$, when $\alpha > -1$, the weighted Bergman-Dirichlet space of order $m$.
\end{definition}

From now on we assume that $\alpha>-1$.

\begin{lemma}\label{lem-embedding}
For every fixed $m$, the space $\ADmm$; $\alpha > -1$, is continuously embedded in $\ADm$, in the sense that
\begin{align}\label{Inegality2}
\norm{f}_{\alpha,m,R}^2 \leq   \frac 1 {C_{\alpha,R,m}}  \norm{f}_{\alpha,m+1,R}^2 .
\end{align}
for every $f\in \ADm$, where
$$ C_{\alpha,R,m} = \min\left(1, \frac{R^{2m}\Gamma(\alpha+2)}{m!\Gamma(m+\alpha+2)}, \frac{\alpha}{R^2}\right) .$$
In particular, $\ADm$ is continuously embedded in the weighted Bergman space $\ADmo={\mathcal{A}^{2,\alpha}_{R,0}(\Dr)}$.
\end{lemma}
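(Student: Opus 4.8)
The plan is to reduce the functional inequality \eqref{Inegality2} to a family of scalar inequalities between the squared norms of the individual monomials. Indeed, by Lemma \ref{lem-orth-norm}(i) the system $e_n(z)=z^n$ is orthogonal for both $\scal{\cdot,\cdot}_{\alpha,m,R}$ and $\scal{\cdot,\cdot}_{\alpha,m+1,R}$, so for $f(z)=\sum_n a_n z^n$ one has
$$\norm{f}_{\alpha,m,R}^2=\sum_{n\ge 0}|a_n|^2\norm{e_n}_{\alpha,m,R}^2, \qquad \norm{f}_{\alpha,m+1,R}^2=\sum_{n\ge 0}|a_n|^2\norm{e_n}_{\alpha,m+1,R}^2.$$
Hence \eqref{Inegality2} is implied, by summing nonnegative terms, by the termwise bound $C_{\alpha,R,m}\,\norm{e_n}_{\alpha,m,R}^2\le \norm{e_n}_{\alpha,m+1,R}^2$ for every $n$; equivalently $C_{\alpha,R,m}\le \rho_n$ where $\rho_n:=\norm{e_n}_{\alpha,m+1,R}^2/\norm{e_n}_{\alpha,m,R}^2$. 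This termwise estimate simultaneously gives the inclusion $\ADmm\subset \ADm$ (a finite $(m+1)$-norm forces a finite $m$-norm) and the continuity constant.

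First I would compute $\rho_n$ from the closed form \eqref{nk-sp}, splitting into three regimes according to the position of $n$ relative to $m$ and $m+1$. For $n<m$ both norms lie in the first branch of \eqref{nk-sp} and coincide, so $\rho_n=1$. For $n=m$ the numerator uses the first branch (as $m<m+1$) and the denominator the second branch (as $m\ge m$); after cancelling the common $\Gamma$-factors and powers of $R$ this yields exactly $\rho_m=R^{2m}\Gamma(\alpha+2)/(m!\,\Gamma(m+\alpha+2))$, the second entry of the minimum. For $n>m$ both norms use the second branch; setting $k=n-m\ge 1$ and using $\Gamma(k+\alpha+2)/\Gamma(k+\alpha+1)=k+\alpha+1$ together with $k!/(k-1)!=k$, I obtain $\rho_n=k(k+\alpha+1)/R^2$.

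The decisive step is the minimization of $\rho_n$ over all $n$. Since $\alpha>-1$, both factors $k$ and $k+\alpha+1$ are positive and strictly increasing in $k\ge 1$, so $k(k+\alpha+1)$ is increasing and the infimum over the regime $n>m$ is attained at $k=1$ (i.e. $n=m+1$), equal to $(\alpha+2)/R^2$. Consequently $\inf_n\rho_n=\min\big(1,\rho_m,(\alpha+2)/R^2\big)$, and since $\alpha/R^2\le(\alpha+2)/R^2$ the stated constant $C_{\alpha,R,m}=\min\big(1,\rho_m,\alpha/R^2\big)$ is a (safe, possibly non‑optimal) lower bound for every $\rho_n$, which is exactly what \eqref{Inegality2} requires. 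I expect this monotonicity argument in the third regime to be the only genuine obstacle, as it is what certifies that the worst constant among the infinitely many monomials $e_n$, $n>m$, already occurs at $n=m+1$; the first two regimes are immediate from \eqref{nk-sp}.

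Finally, for the \emph{in particular} assertion I would iterate the one-step embedding just proved, applying it with $m$ replaced successively by $m-1,m-2,\dots,0$. This produces the chain $\ADm\hookrightarrow \mathcal{A}^{2,\alpha}_{R,m-1}(\Dr)\hookrightarrow\cdots\hookrightarrow\ADmo$, and composing these finitely many continuous inclusions (the product of the finitely many positive constants $C_{\alpha,R,j}$ being again positive) gives the continuous embedding of $\ADm$ into the weighted Bergman space $\ADmo$.
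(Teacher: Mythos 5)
Your proof is correct and follows essentially the same route as the paper's: orthogonality of the monomials reduces the embedding to the termwise comparison of $\norm{e_n}_{\alpha,m+1,R}^2$ with $\norm{e_n}_{\alpha,m,R}^2$ in the three regimes $n<m$, $n=m$, $n\geq m+1$, and the constant is the minimum of the resulting ratios. Your additional observation that the infimum in the third regime is actually $(\alpha+2)/R^2$ (so the paper's third entry $\alpha/R^2$ is a strictly weaker, indeed non-positive when $\alpha\le 0$, lower bound) is a small sharpening the paper does not make, but it does not alter the argument.
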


\begin{proof} According to Lemma \ref{lem-growth}, any $f(z)= \sum\limits_{n=0}^{\infty}a_{n}z^n$ in $\ADmm$
 satisfies
$$ \norm{f}_{\alpha,m+1,R}^2= \sum\limits_{n=0}^{\infty} |a_{n}|^{2}\norm{e_{n}}_{\alpha,m+1,R}^2<+\infty; \, e_n(z)=z^n.$$
Now by means of \eqref{nk-sp}, we get
 \begin{align*}
\norm{ e_{n}}_{\alpha,m+1,R}^2
= \left\{ \begin{array}{lll}
\norm{ e_{n}}_{\alpha,m,R}^2        & \quad \mbox{for } \, n < m < m+1 \\
\\
\frac{R^{2m}\Gamma(\alpha+2)}{m!\Gamma(m+\alpha+2)} \norm{ e_{m}}_{\alpha,m,R}^2         & \quad \mbox{for } \, n =  m  < m+1\\
\\
\frac{(n-m)(n-m+\alpha+1)}{R^2}  \norm{ e_{n}}_{\alpha,m,R}^2
& \quad \mbox{for } \,  n\geq m + 1 \geq m
\end{array} \right.
 \end{align*}
 and therefore
  \begin{align*}
\norm{ e_{n}}_{\alpha,m+1,R}^2
&\geq \min\left(1, \frac{R^{2m}\Gamma(\alpha+2)}{m!\Gamma(m+\alpha+2)}, \frac{\alpha}{R^2}\right)  \norm{ e_{n}}_{\alpha,m,R}^2\\
&\geq   C_{\alpha,R,m} \norm{ e_{n}}_{\alpha,m,R}^2,
 \end{align*}
 where the constant $ C_{\alpha,R,m}$ is independent of $n$. It is given by
  $$ C_{\alpha,R,m} = \min\left(1, \frac{R^{2m}\Gamma(\alpha+2)}{m!\Gamma(m+\alpha+2)}, \frac{\alpha}{R^2}\right) .$$
 Thus, it follows
  \begin{align*}
   \norm{f}_{\alpha,m+1,R}^2 \geq  C_{\alpha,R,m}   \sum\limits_{n=0}^{\infty} |a_{n}|^{2}\norm{e_{n}}_{\alpha,m,R}^2
    \geq  C_{\alpha,R,m}  \norm{f}_{\alpha,m,R}^2.
  \end{align*}
This implies in particular that $\norm{f}_{\alpha,m,R}^2$ is finite, so that $f \in \ADm$ and the embedding mapping is continuous from $\ADmm$ into $\ADm$. This completes the proof.
\end{proof}

An other basic property for the spaces $\ADm$ is the following

\begin{proposition}\label{Hilbert}
The space $\ADm$ is a Hilbert space and the monomials $e_{n}(z)=z^{n}$; $n\geq 0$, constitute an orthogonal basis of it.
\end{proposition}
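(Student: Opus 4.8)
The plan is to reduce the whole statement to the analogous, and elementary, facts about a weighted sequence space. Writing $\omega_n := \norm{e_n}_{\alpha,m,R}^2$, which by Lemma \ref{lem-orth-norm}(iii) is a strictly positive real number for every $n$ (recall we now assume $\alpha>-1$), I would introduce the weighted Hilbert space $\ell^2_\omega$ of complex sequences $(a_n)_n$ with $\sum_n |a_n|^2\omega_n<+\infty$, endowed with its natural inner product; this is complete by the standard argument for weighted $\ell^2$ spaces. By Lemma \ref{lem-growth} the assignment $T\colon f=\sum_n a_n z^n \mapsto (a_n)_n$ maps $\ADm$ into $\ell^2_\omega$ and is isometric, since $\norm{f}_{\alpha,m,R}^2=\sum_n|a_n|^2\omega_n$. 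The whole content of the proposition is then that $T$ is a bijection and that the $e_n$ form the image of the canonical basis.

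First I would record positive-definiteness: if $\norm{f}_{\alpha,m,R}=0$ then $\sum_n|a_n|^2\omega_n=0$, and since all $\omega_n>0$ this forces $a_n=0$ for all $n$, hence $f\equiv 0$. Thus $\scal{,}_{\alpha,m,R}$ is a genuine inner product and $T$ is injective (injectivity also being immediate from uniqueness of Taylor coefficients). For surjectivity, given $(a_n)_n\in\ell^2_\omega$ I must check that $f(z)=\sum_n a_n z^n$ is actually an element of $\HolDr$, i.e. that the series converges throughout $\Dr$. Using the explicit formula \eqref{nk-sp} together with Stirling's asymptotics one obtains $\omega_n^{1/n}\to R^2$; since $\sum_n|a_n|^2\omega_n<+\infty$ forces the sequence $|a_n|^2\omega_n$ to be bounded, it follows that $\limsup_n|a_n|^{1/n}\le 1/R$, so the radius of convergence of $\sum_n a_n z^n$ is at least $R$ and $f\in\HolDr$. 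By Lemma \ref{lem-growth} then $f\in\ADm$ with $Tf=(a_n)_n$. Hence $T$ is an isometric isomorphism of $\ADm$ onto the complete space $\ell^2_\omega$, so $\ADm$ is itself a Hilbert space.

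It remains to see that $\{e_n\}_{n\ge 0}$ is an orthogonal basis. Orthogonality and $e_n\neq 0$ are exactly Lemma \ref{lem-orth-norm}(i) and (iii). For completeness of the system I would argue that every $f=\sum_n a_n z^n\in\ADm$ is the norm-limit of its partial sums $S_N=\sum_{n=0}^N a_n e_n$: indeed $\norm{f-S_N}_{\alpha,m,R}^2=\sum_{n>N}|a_n|^2\omega_n$ is the tail of a convergent series and therefore tends to $0$ as $N\to+\infty$. Consequently $f$ lies in the closed linear span of $\{e_n\}$, which, together with the established orthogonality, shows that $\{e_n\}$ is an orthogonal basis of $\ADm$. (In particular $\scal{f,e_n}_{\alpha,m,R}=\lim_N\scal{S_N,e_n}_{\alpha,m,R}=a_n\omega_n$, recovering the coefficients.)

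I expect the only nonroutine step to be the surjectivity of $T$, and specifically the verification, through the asymptotic $\omega_n^{1/n}\to R^2$, that an abstract $\ell^2_\omega$-sequence genuinely produces a function holomorphic on the \emph{entire} disk $\Dr$ rather than on a strictly smaller disk. Once the isometry $\ADm\cong\ell^2_\omega$ is in place, completeness of $\ADm$ and the basis property follow formally.
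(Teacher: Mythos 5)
Your proof is correct, but it takes a genuinely different route from the paper's. The paper obtains completeness of $\ADm$ by invoking the continuous embedding into the weighted Bergman space $\ADmo$ from Lemma \ref{lem-embedding} (a Cauchy sequence then converges in the Bergman norm, hence locally uniformly, to a holomorphic limit which one checks lies in $\ADm$), and it proves completeness of the system $\{e_n\}$ by computing that $\scal{f,e_n}_{\alpha,m,R}$ is a positive multiple of $a_n$, so that the orthogonal complement of the linear span is trivial. You instead build an explicit isometric isomorphism $T$ of $\ADm$ onto the weighted sequence space $\ell^2_\omega$ and import completeness from there; the one nontrivial analytic input is your root-test computation $\omega_n^{1/n}\to R^2$, which does follow from \eqref{nk-sp} and the asymptotics $\Gamma(n+a)/\Gamma(n+b)\sim n^{a-b}$, and which guarantees that every $\ell^2_\omega$-sequence yields a function holomorphic on all of $\Dr$, i.e. that $T$ is onto. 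Your treatment of the basis property via norm-convergence of the partial sums is also valid and in fact slightly stronger than the paper's orthocomplement argument, since it exhibits the Taylor series of each $f$ as norm-convergent in $\ADm$. What your approach buys is self-containedness: it bypasses Lemma \ref{lem-embedding} entirely and makes explicit the surjectivity step that the paper's sequential characterization (Lemma \ref{lem-growth}) leaves implicit. What the paper's approach buys is brevity, at the cost of leaving the ``not difficult to see'' completeness argument to the reader.
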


\begin{proof}
Since $\ADm$ is continuously embedded in the weighted Bergman space $\ADmo$, it is not difficult to see that $\ADm$ is a Hilbert space.
What is needed, to show that $\{e_{n}\}_n$ is a basis of $\ADm$, is completeness.
Indeed, let $f(z)= \sum\limits_{n=0}^{\infty}a_{n}z^n $ be in the orthogonal of the linear span of $(e_{n})_n$ in $(\ADm,\scal{ \cdot , \cdot}_{\alpha,m,R})$,
$$f(z)= \sum\limits_{n=0}^{\infty}a_{n}z^n  \in \Big(Span\{e_{n}; \, n\geq 0\}\Big)^{\perp_{\scal{ \cdot , \cdot}_{\alpha,m,R}}}.$$
 Thus, we have
$\scal{ f , e_{n}}_{\alpha,m,R} = 0$ for every $n$. Now, since
$$ \scal{ f, e_{n}}_{\alpha,m,R} =
a_n \left\{ \begin{array}{lll}
\norm{ e_{n}}_{\alpha,R}^2        & \quad \mbox{for } \, n < m  \\  \\
(m!)^2 \norm{ e_{n-m}}_{\alpha,R}^2       & \quad \mbox{for } \, n \geq  m
\end{array} \right. ,
$$
it follows that $a_{n}=0$ for all $n\geq 0$.
 This proves $$\{0\}= (Span\{e_{m+n}; \, n\geq 0\})^{\perp_{\scal{ \cdot , \cdot}_{\alpha,m,R}}} = (\overline{Span\{e_{n}; \, n\geq 0\}}^{\norm{\cdot}_{\alpha,m,R}})^{\perp_{\scal{ \cdot , \cdot}_{\alpha,m,R}}}$$ and therefore
 $$\overline{Span\{e_{n}; \, n\geq 0\}}^{\norm{\cdot}_{\alpha,m,R}}=\ADm.$$
\end{proof}

In order to prove that $\ADm$ is a reproducing kernel Hilbert space, we need to show the following

\begin{lemma}\label{lem-evaluation}
The point evaluation in $\Dr$ is a bounded operation, namely for any fixed $z\in \Dr$, there exists a constant $C_z$ such that
$$ |f(z)| \leq C_z \, \norm{f}_{\alpha,m,R}  $$
for every $f\in \ADm$. Moreover, the mapping $z \mapsto C_z$ is continuous.
\end{lemma}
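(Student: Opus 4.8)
The plan is to bound $|f(z)|$ directly by a Cauchy--Schwarz estimate against the orthogonal system $(e_n)_n$, using the coefficient growth condition from Lemma \ref{lem-growth}. First I would fix $z\in\Dr$ and write $f(z)=\sum_{n=0}^{+\infty}a_nz^n$, so that, by Lemma \ref{lem-growth} together with part (i) of Lemma \ref{lem-orth-norm}, $\sum_{n=0}^{+\infty}|a_n|^2\norm{e_n}_{\alpha,m,R}^2=\norm{f}_{\alpha,m,R}^2<+\infty$. Writing each summand as $|a_n||z|^n=\big(|a_n|\,\norm{e_n}_{\alpha,m,R}\big)\big(|z|^n/\norm{e_n}_{\alpha,m,R}\big)$ and applying the Cauchy--Schwarz inequality yields
\begin{align*}
|f(z)|\le\sum_{n=0}^{+\infty}|a_n||z|^n\le\norm{f}_{\alpha,m,R}\left(\sum_{n=0}^{+\infty}\frac{|z|^{2n}}{\norm{e_n}_{\alpha,m,R}^2}\right)^{1/2},
\end{align*}
so the natural candidate for the bounding constant is $C_z:=\big(\sum_{n\ge0}|z|^{2n}\norm{e_n}_{\alpha,m,R}^{-2}\big)^{1/2}$.

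The heart of the matter is to verify that this series converges for every $z\in\Dr$. I would use the explicit norms \eqref{nk-sp}: for $n\ge m$ a direct computation gives
$$\frac{\norm{e_{n+1}}_{\alpha,m,R}^2}{\norm{e_n}_{\alpha,m,R}^2}=R^2\,\frac{(n+1)^2}{(n+1-m)(n-m+\alpha+2)},$$
so that the ratio of two consecutive terms of the series defining $C_z^2$ is
$$\frac{|z|^2}{R^2}\cdot\frac{(n+1-m)(n-m+\alpha+2)}{(n+1)^2}\xrightarrow[n\to+\infty]{}\frac{|z|^2}{R^2}<1.$$
By the ratio test the series converges, hence $C_z<+\infty$ and $|f(z)|\le C_z\norm{f}_{\alpha,m,R}$ for all $f\in\ADm$, which is the asserted boundedness of the point evaluation.

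It remains to establish the continuity of $z\mapsto C_z$. Setting $g(z):=C_z^2=\sum_{n\ge0}\norm{e_n}_{\alpha,m,R}^{-2}|z|^{2n}$, the same computation shows that, viewed as a power series in the real variable $t=|z|^2$, its radius of convergence equals exactly $R^2$ (the coefficient ratio tends to $R^{-2}$). Such a power series converges locally uniformly on $[0,R^2)$, so $g$ is continuous there; composing with the continuous maps $z\mapsto|z|^2$ and $t\mapsto\sqrt{t}$ shows that $z\mapsto C_z=\sqrt{g(z)}$ is continuous on $\Dr$. The only delicate point is the ratio computation, where one must select the second branch of \eqref{nk-sp} valid for $n\ge m$ and correctly simplify the Gamma quotient $\Gamma(n-m+\alpha+2)/\Gamma(n+1-m+\alpha+2)=(n-m+\alpha+2)^{-1}$; the remaining manipulations are routine.
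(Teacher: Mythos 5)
Your proof is correct and follows essentially the same route as the paper: bound $|f(z)|$ by Cauchy--Schwarz against the orthogonal system $(e_n)_n$ and take $C_z=\bigl(\sum_{n\ge 0}|z|^{2n}\norm{e_n}_{\alpha,m,R}^{-2}\bigr)^{1/2}$. In fact you go further than the paper, which does not verify that this series converges nor that $z\mapsto C_z$ is continuous; your ratio-test computation (the ratio of consecutive terms tends to $|z|^2/R^2<1$) and the locally uniform convergence argument on $[0,R^2)$ correctly supply both missing details.
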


\begin{proof} For every $ f(z)= \sum\limits_{n=0}^{+\infty}a_{n}z^n \in \ADm$,  we have
\begin{eqnarray*}
|f(z)| \leq  \sum\limits_{n=0}^{+\infty} \left | a_{n}e_{n}(z)\right|
 = \sum\limits_{n=0}^{+\infty} \left(\frac{|e_{n}(z)|}{\norm{e_{n}}_{\alpha,m,R}} \right)  \left(|a_{n}| \norm{e_{n}}_{\alpha,m,R}\right).
\end{eqnarray*}
By the Cauchy-Schwartz inequality, we get
\begin{align*}
|f(z)| \leq  \left(\sum\limits_{n=0}^{+\infty} \frac{|e_{n}(z)|^2}{\norm{e_{n}}_{\alpha,m,R}^2} \right)^{\frac{1}{2}}
     \left(\sum\limits_{n=0}^{+\infty} |a_{n}|^2 \norm{e_{n}}_{\alpha,m,R}^{2} \right)^{\frac{1}{2}}.
\end{align*}
Thence, $f$ satisfies the pointwise estimate $ |f(z)| \leq  C_z \, \norm{f}_{\alpha,m,R},$ where $C_z $ stands for
$$ C_z  := \left(\sum\limits_{n=0}^{+\infty} \frac{|e_{n}(z)|^2}{\norm{e_{n}}_{\alpha,m,R}^2} \right)^{\frac{1}{2}} .$$
Thus,  the evaluation mapping $\delta_{z}: f\mapsto f(z)$ is a continuous linear form on $\ADm$.
\end{proof}

Therefore $\ADm$ is a reproducing kernel Hilbert space by Riesz representation theorem, whose the reproducing kernel function
 is given explicitly in terms of the ${_3F_2}$-hypergeometric function \cite[Chapter 5]{Rainville71},
 $$ {_3F_2}\left( \begin{array}{c} a , b , c \\ a' , b' \end{array}\bigg | x \right)
  = \sum\limits_{k=0}^{+\infty} \frac{(a)_k(b)_k(c)_k}{(a')_k(b')_k} \frac{x^k}{k!}; \quad |x| < 1,$$
  where $(a)_k=a(a+1) \cdots (a+k-1)$ is the Pochhammer symbol.
  Namely, wa have the following.

\begin{proposition}\label{RK-mtext}
 The reproducing kernel of $\ADm$ is given by
\begin{align}\label{expRK-ADm}
K^{\alpha}_{R,m}(z,w) =  \frac{(\alpha+1)}{\pi R^{2}}
\left\{ \sum\limits_{n=0}^{m-1} (\alpha+2)_{n}\frac{(z\overline{w})^{n}}{n!R^{2n}}
+ \frac{(z \overline{w})^{m}}{(m!)^{2}}
{_3F_2}\left( \begin{array}{c} 1 , 1 , \alpha+2 \\ m+1 , m+1 \end{array}\bigg | \frac{z \overline{w}}{R^{2}} \right) \right\}.
\end{align}
\end{proposition}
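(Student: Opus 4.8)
The plan is to construct the reproducing kernel explicitly from the orthogonal basis established in Proposition~\ref{Hilbert}. Since the monomials $e_n(z) = z^n$ form an orthogonal basis of $\ADm$ with the norms $\norm{e_n}_{\alpha,m,R}^2$ computed in Lemma~\ref{lem-orth-norm}, the reproducing kernel admits the standard series representation
\begin{align*}
K^{\alpha}_{R,m}(z,w) = \sum_{n=0}^{+\infty} \frac{e_n(z)\overline{e_n(w)}}{\norm{e_n}_{\alpha,m,R}^2} = \sum_{n=0}^{+\infty} \frac{(z\overline{w})^n}{\norm{e_n}_{\alpha,m,R}^2}.
\end{align*}
First I would invoke this formula, which is legitimate because the point evaluations are bounded (Lemma~\ref{lem-evaluation}) and $\{e_n\}$ is a complete orthogonal system. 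The entire computation then reduces to substituting the two-regime formula \eqref{nk-sp} for $\norm{e_n}_{\alpha,m,R}^2$ and recognizing the resulting series as the claimed combination of a polynomial part and a ${_3F_2}$-hypergeometric tail.

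Next I would split the sum at $n = m$ according to the two cases in \eqref{nk-sp}. For the finite part $0 \le n \le m-1$, substituting $\norm{e_n}_{\alpha,m,R}^2 = \pi R^{2n+2} \frac{n!\,\Gamma(\alpha+1)}{\Gamma(n+\alpha+2)}$ gives terms of the form $\frac{\Gamma(n+\alpha+2)}{\pi R^{2n+2} n!\,\Gamma(\alpha+1)}(z\overline{w})^n$, which I would rewrite using $\Gamma(n+\alpha+2)/\Gamma(\alpha+1) = (\alpha+1)(\alpha+2)_n$ to match the polynomial term $\frac{(\alpha+1)}{\pi R^2}\sum_{n=0}^{m-1}(\alpha+2)_n \frac{(z\overline{w})^n}{n!\,R^{2n}}$ in \eqref{expRK-ADm}. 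For the infinite tail $n \ge m$, substituting the second case of \eqref{nk-sp} and reindexing by $k = n - m$ converts each reciprocal norm into $\frac{(k+m)!\,\Gamma(k+\alpha+2)\,k!}{\pi R^{2k+2}\,((k+m)!)^2\,\Gamma(\alpha+1)}$; after pulling out the factor $(z\overline{w})^m$ and simplifying, the coefficient of $(z\overline{w}^k/R^{2k}$ should collapse to the Pochhammer ratio $\frac{(1)_k(1)_k(\alpha+2)_k}{(m+1)_k(m+1)_k\,k!}$, which is exactly the general term of ${_3F_2}\!\left(\begin{smallmatrix}1,1,\alpha+2\\ m+1,m+1\end{smallmatrix}\,\big|\,\frac{z\overline{w}}{R^2}\right)$.

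The main obstacle, and the step deserving the most care, is the bookkeeping of Gamma factors in the tail that turns the raw coefficient into the clean Pochhammer form. The key algebraic identities I would use are $(k+m)! = m!\,(m+1)_k$, so that $((k+m)!)^2 = (m!)^2 (m+1)_k^2$ produces the two $(m+1)_k$ factors in the denominator; $\Gamma(k+\alpha+2)/\Gamma(\alpha+2) = (\alpha+2)_k$ together with $\Gamma(\alpha+2) = (\alpha+1)\Gamma(\alpha+1)$, which supplies both the $(\alpha+2)_k$ numerator factor and the overall prefactor $(\alpha+1)/(\pi R^2)$; and the trivial $k! = (1)_k$ accounting for the remaining factors. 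Matching these against the $\frac{1}{(m!)^2}$ prefactor of the hypergeometric term in \eqref{expRK-ADm} completes the identification. Finally I would note that convergence of the ${_3F_2}$ series for $|z\overline{w}/R^2| < 1$ is automatic since the underlying kernel series converges on $\Dr \times \Dr$, so no separate convergence argument is required.
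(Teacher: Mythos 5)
Your proposal is correct and follows essentially the same route as the paper: expand the kernel over the orthogonal basis $\{z^n\}$, split at $n=m$, reindex by $k=n-m$, and use the identities $(k+m)!=m!\,(m+1)_k$, $\Gamma(k+\alpha+2)=(\alpha+1)\Gamma(\alpha+1)(\alpha+2)_k$ and $k!=(1)_k$ to identify the tail as the ${_3F_2}$. (The only blemish is a transcription slip in your intermediate expression for the reciprocal norm, which carries a spurious factor $(k+m)!$ in the numerator; the identities you list and the final Pochhammer ratio are nevertheless the correct ones.)
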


\begin{proof}
Recall from above that $(z^n)_{n\geq 0}$ is an orthogonal basis of the reproducing kernel Hilbert space $\ADm$. 
Therefore, the reproducing kernel function $K^{\alpha}_{R,m}(z,w)$; $z,w \in \Dr$, of $\ADm$ can be computed by evaluating the sum
$$ K^{\alpha}_{R,m} (z,w) = \sum\limits_{n=0}^{+\infty} \frac{z^n \overline{w}^n } {\norm{e_{n}}_{\alpha,m,R}^{2} } .$$
More explicitly, we have
\begin{align*}
K^{\alpha}_{R,m}(z,w) =  \frac{1}{\pi R^{2}}
\left\{\sum\limits_{n=0}^{m-1}\frac{\Gamma(\alpha+2+n)}{\Gamma(n+1)\Gamma(\alpha+1)}\frac{(z\overline{w})^{n}}{R^{2n}}
 +
 \sum\limits_{n=m}^{+\infty} \frac{\Gamma(n-m+1)\Gamma(\alpha+2+n-m)}{\Gamma^{2}(n+1)\Gamma(\alpha+1)} \frac{(z\overline{w})^{n} }{R^{2(n-m)}}\right\} .
 \end{align*}
By means of $(\alpha+1)\Gamma(\alpha+1)=\Gamma(\alpha+2)$ and the change of index $n-m=p$, we get
\begin{align*}
K^{\alpha}_{R,m}(z,w) =  \frac{(\alpha+1)}{\pi R^{2}} \left\{\sum\limits_{n=0}^{m-1}\frac{\Gamma(\alpha+2+n)}{\Gamma(n+1)\Gamma(\alpha+2)}\left(\frac{z\overline{w}}{R^{2}}\right)^{n}
 +
(z\overline{w})^{m}  \sum\limits_{p=0}^{+\infty}  \frac{\Gamma^2(p+1)\Gamma(\alpha+2+p)}{\Gamma^{2}(p+m+1)\Gamma(\alpha+2)}  \frac{\left(\frac{z\overline{w}}{R^{2}}\right)^{p}}{p!} \right\}.
 \end{align*}
 Finally, since $ \Gamma(p+1) = (1)_p$, $\Gamma(p+m+1)= m! (m+1)_p$ and $\frac{\Gamma(\alpha+2+p)}{\Gamma(\alpha+2)}=(\alpha+2)_p$, it follows
 \begin{align*}
K^{\alpha}_{R,m}(z,w) &=   \frac{(\alpha+1)}{\pi R^{2}}
\left\{\sum\limits_{n=0}^{m-1}\frac{(\alpha+2)_n}{n!}\left(\frac{z\overline{w}}{R^{2}}\right)^{n}
+
\frac{(z\overline{w})^{m}}{(m!)^2}  \sum\limits_{p=0}^{+\infty}  \dfrac{(1)_p(1)_p (\alpha+2)_p}{(m+1)_p(m+1)_p}   \frac{\left(\frac{z\overline{w}}{R^{2}}\right)^{p}}{p!} \right\} \\
&= \frac{(\alpha+1)}{\pi R^{2}} \left\{ \sum\limits_{n=0}^{m-1} (\alpha+2)_{n}\frac{(z\overline{w})^{n}}{n! R^{2n}}
+ \frac{(z \overline{w})^{m}}{(m!)^{2}}
{_3F_2}\left( \begin{array}{c} 1 , 1 , \alpha+2 \\ m+1 , m+1 \end{array}\bigg | \frac{z \overline{w}}{R^{2}} \right) \right\}.
\end{align*}
This completes the proof.
\end{proof}

\begin{remark}\label{Rem-11}
Making use of the fact ${_3F_2}\left( \begin{array}{c} a , a , c \\ a , a \end{array}\bigg | x \right) = (1-x)^{-c}$, we see
   that for the special case of $m=0$, the formula \eqref{expRK-ADm} reads simply
     \begin{align*} 
  K^{\alpha}_{R,0}(z,w)=\frac{(\alpha+1)}{\pi R^{2}} \left( \frac{R^{2}}{R^{2} -  z \overline{w}} \right)^{\alpha+2},
     \end{align*}
  which corresponds to the reproducing kernel of the weighted Bergman space $\ADmo$ on $\Dr$ ((\cite{GhIn2005JMP})). 
\end{remark}

\begin{remark}\label{Rem-12}For $R=1$, $\alpha=0$ and $m=1$, the corresponding reproducing kernel reduces further to be the reproducing kernel the classical Dirichlet space,
\begin{align*} 
K^{\alpha}_{R,1}(z,w) = \frac{1}{\pi }\left( 1 +  z \overline{w} \, {_2F_1}\left( \begin{array}{c} 1 , 1  \\  2 \end{array}\bigg | z \overline{w} \right)\right)
=   \frac{1}{\pi }\left( 1 + \ln\left( \frac 1{1-z}\right)\right)
\end{align*}
thanks to  the transformation \cite[p. 109]{Temme1996} 
\[ x {_2F_1}\left( \begin{array}{c} 1 , 1  \\ 2 \end{array}\bigg | x \right) =
 \mathop{\ln\/}\nolimits\! \left(\frac{1}{1-x}\right).\]
\end{remark}

We conclude this section by noting that the proof of Theorem \ref{Mthm1}, stated in the introductory section,
 is contained in the previous established results (essentially Lemmas \ref{lem-evaluation} and \ref{lem-growth},
 and Propositions \ref{Hilbert} and \ref{RK-mtext}).

\section{Weighted Bargmann-Dirichlet spaces of order $m$ on the complex plane}

Fix a real number $\nu > 0$ and denote by $\LC$ the usual Hilbert space of all square-integrable functions on $\C$ with respect to the Gaussian mesure $\nuar(z)= e^{-\nu|z|^{2}}\mes(z)$.  The hermitian inner product is defined by
\begin{align}
\scal{ f , g}_{\nu}:= \int_{\C} f(z) \overline{g(z)}  \nuarexp(z),
\end{align}
and the associated norm by
\begin{align}
\norm{f}^{2}_{\nu}:= \int_{\C}|f(z)|^{2} \nuar(z).
\end{align}
For fixed nonnegative integer $ m=0,1,2, \cdots$, any $f(z)=\sum\limits_{n=0}^{+\infty}a_{n}z^{n}$ in the vector space $\HolC$
 of all convergent entire series on $\C$, can be written as $f(z)=f_{1,m}(z)+f_{2,m}(z) $, where
$$f_{1,m}(z):=\sum\limits_{n=0}^{m-1}a_{n}z^{n}  \quad\quad \mbox{and} \quad   f_{2,m}(z):=\sum\limits_{n=m}^{+\infty}a_{n}z^{n}. $$
Then, one can perform the functional space $\BDm$ of all entire functions $f\in \HolC$ such that
$$ \norm{f}_{\nu,m}^{2}:=\norm{f_{1,m}}_{\nu}^{2}+\norm{f^{(m)}_{2,m}}_{\nu}^{2}<+\infty.$$
Notice that the hermitian inner product $\scal{ \cdot , \cdot}_{\nu,m}$ associated to the norm $\norm{\cdot}_{\nu,m}$ is given through
\begin{align} \label{relation-mo}
\scal{ f , g}_{\nu,m} : = \scal{ f_{1,m} , g_{1,m}}_{\nu}+\scal{ f_{2,m}^{(m)} , g_{2,m}^{(m)}}_{\nu} .
\end{align}

\begin{lemma} The monomials $e_{n}(z)=z^{n}$; $n\geq 0$, belong to $\BDm$ and are pairwise orthogonal with respect to the hermitian scalar product
$\norm{\cdot}_{\nu,m}$, with
   \begin{align} \label{nk-spc}
   \norm{e_{n}}_{\nu,m}^{2} = \pi \left\{\begin{array}{ll}
                                    \frac{n!}{\nu^{n+1}}                    &\quad \mbox{for } \,   n<m \\
                                    \frac{(n!)^2}{\nu^{n-m+1}\Gamma(n-m+1)} &\quad \mbox{for } \, n\geq m
                                  \end{array} \right. .
   \end{align}
   Moreover, a function $f(z)=\sum\limits_{n=0}^{+\infty}a_{n}z^{n} $ belongs to $\BDm$ if and only if
   \begin{align} \label{normefc}
     \norm{f}_{\nu,m}^{2}=\pi \left\{ \sum\limits_{n=0}^{m-1}\left(\frac{n!}{\nu^{n+1}}\right)\mid a_{n}\mid ^{2} +\sum\limits_{n=m}^{+\infty}\left( \frac{(n!)^{2}}{\nu^{n-m+1}(n-m)!}\right)\mid a_{n}\mid ^{2} \right\}<+\infty .
   \end{align}
\end{lemma}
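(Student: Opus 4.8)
The plan is to follow the proof of Lemma~\ref{lem-orth-norm} almost verbatim, replacing the Euler Beta integral coming from the weight $(1-|z/R|^2)^\alpha$ by the Gamma integral coming from the Gaussian weight $e^{-\nu|z|^2}$. First I would reduce the computation of $\scal{e_n,e_k}_{\nu,m}$ to that of the plain inner product $\scal{e_n,e_k}_\nu$. Indeed, from the splitting $e_n=(e_n)_{1,m}+(e_n)_{2,m}$ one has $(e_n)_{1,m}=e_n$ and $(e_n)_{2,m}=0$ when $n<m$, whereas $(e_n)_{1,m}=0$ and $(e_n)_{2,m}^{(m)}=\frac{n!}{(n-m)!}\,e_{n-m}$ when $n\ge m$. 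Substituting this into the definition \eqref{relation-mo} gives the three-case formula
\[
\scal{e_n,e_k}_{\nu,m}=
\begin{cases}
\scal{e_n,e_k}_\nu & n,k<m,\\
0 & n<m\le k,\\
\dfrac{n!\,k!}{(n-m)!(k-m)!}\,\scal{e_{n-m},e_{k-m}}_\nu & n,k\ge m,
\end{cases}
\]
the symmetric range $k<m\le n$ again giving $0$; this is the exact analogue of \eqref{int-sp1}.

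Next I would compute $\scal{e_n,e_k}_\nu$ in polar coordinates $z=re^{i\theta}$, with $r\in[0,\infty)$ and $\theta\in[0,2\pi)$. The angular integral $\int_0^{2\pi}e^{i(n-k)\theta}\,d\theta=2\pi\delta_{n,k}$ immediately yields pairwise orthogonality and collapses the radial part to a single Gaussian moment,
\[
\norm{e_n}_\nu^2=\scal{e_n,e_n}_\nu=2\pi\int_0^{\infty}r^{2n+1}e^{-\nu r^2}\,dr=\frac{\pi\,n!}{\nu^{\,n+1}},
\]
the last equality following from the substitution $t=\nu r^2$ and the identity $\int_0^\infty t^n e^{-t}\,dt=\Gamma(n+1)=n!$. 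A noteworthy point, which distinguishes this case from Lemma~\ref{lem-orth-norm}(ii), is that this integral converges for \emph{every} $n\ge0$ because of the Gaussian decay; hence no restriction on the parameter is needed and all the monomials already belong to $\BDm$.

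Assembling the two previous steps gives \eqref{nk-spc}: for $n<m$ one has $\norm{e_n}_{\nu,m}^2=\norm{e_n}_\nu^2=\pi n!/\nu^{\,n+1}$, while for $n\ge m$,
\[
\norm{e_n}_{\nu,m}^2=\Big(\frac{n!}{(n-m)!}\Big)^2\norm{e_{n-m}}_\nu^2=\frac{\pi\,(n!)^2}{(n-m)!\,\nu^{\,n-m+1}}.
\]
For the characterization \eqref{normefc}, I would argue as in Lemma~\ref{lem-growth}: using the orthogonality just established, the norm of $f(z)=\sum_{n\ge0}a_nz^n$ decouples as $\norm{f}_{\nu,m}^2=\sum_{n\ge0}|a_n|^2\,\norm{e_n}_{\nu,m}^2$, so that $f\in\BDm$ precisely when this series converges; inserting \eqref{nk-spc} turns this into \eqref{normefc} verbatim.

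The computations themselves are elementary, so the only point requiring genuine care is the term-by-term decoupling of the norm, i.e.\ justifying that $\scal{\cdot,\cdot}_{\nu,m}$ commutes with the infinite summation defining $f$. I expect this to be the main (and only mild) obstacle: it is handled by applying the orthogonality to the partial sums and passing to the limit by monotone/dominated convergence, exactly as in the proof of Lemma~\ref{lem-growth}, taking care that $f_{2,m}^{(m)}$ arises from termwise differentiation of a power series that converges locally uniformly on $\C$.
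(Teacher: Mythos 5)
Your proposal is correct and follows essentially the same route as the paper: the same three-case reduction of $\scal{e_n,e_k}_{\nu,m}$ to $\scal{e_n,e_k}_{\nu}$, the same Gaussian moment computation in polar coordinates via $t=\nu r^2$ giving $\pi n!/\nu^{n+1}$, and the same appeal to orthogonality to decouple $\norm{f}_{\nu,m}^2$ into the series \eqref{normefc}. Your added remarks on why no parameter restriction is needed and on justifying the term-by-term decoupling are more careful than the paper's own (quite terse) proof, but they do not change the argument.
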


\begin{proof}
By definition \eqref{relation-mo} of $\norm{\cdot}_{\nu,m}$, it is not difficult to see that $\displaystyle \scal{ e_{n} , e_{k}}_{\nu,m} : = \scal{ e_{n} , e_{k}}_{\nu}$ when $n,k<m $, $\scal{ e_{n} , e_{k}}_{\nu,m} : = \frac{n!k!}{(n-m)!(k-m)!}\scal{e_{n-m} , e_{k-m}}_{\nu}$ when $m\leq n,k $ and
$\scal{ e_{n} , e_{k}}_{\nu,m} =0$ otherwise. Thus, the first assertion follows making use of the well established formula 
$$\int_{\C} z^n \overline{z}^k \nuarexp
= \frac{\pi n!}{\nu^{n+1}}  \delta_{n,k},$$
which can be handled using polar coordinates $z=re^{i\theta}$ and the change $t=\nu r^2$, combined with the known facts
$$\int_0^{2\pi} e^{i(n-k)\theta} d\theta = 2\pi \delta_{n,k} \quad \mbox{and} \quad  \int_0^{+\infty} t^{n} e^{-t} dt =n!.$$
Finally, \eqref{normefc} follows by orthogonality of the monomials in $\BDm$ keeping in mind the explicit expression of $\norm{e_{n}}_{\nu,m}^{2}$
given through \eqref{nk-spc} and the fact that the series $f$ belongs to  $\BDm$ if and only if  $\norm{f}_{\nu,m}^{2}$ is finite.
\end{proof}

\begin{remark}
For $m=0$, the considered space is to the classical Bargmann-Fock Hilbert space
$\BDmo:=L^{2}\left(\C;\nuarexp\right)\cap \HolC.$ While for $m=1$, it reads simply
$$\BDmmo:=\bigg\{f\in \HolC, \quad \quad\dfrac{\pi}{\nu}|f(0)|^{2}+\int_{\C}|f'(z)|^{2} \nuarexp(z) <+\infty \bigg\}.$$
\end{remark}

\begin{definition}
We call $\BDm$ weighted Bargmann-Dirichlet spaces of order $m$.
\end{definition}

\begin{lemma}
   For every fixed nonnegative integer $m$, the space $\BDmm$ is continuously embedded in $\BDm$. More precisely, there exists a constant
   $C_{\nu,m}$ depending only in $\nu$ and $m$ such that
  $$\norm{f}^{2}_{\nu,m}\leq C_{\nu,m} \norm{f}^{2}_{\nu,m+1}.$$
   In particular, the weighted Bargmann-Dirichlet space $\BDm$ is continuously embedded in the classical Bargmann-Fock space.
 \end{lemma}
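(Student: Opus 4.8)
The plan is to imitate verbatim the argument already carried out for the disk in Lemma \ref{lem-embedding}, simply replacing the monomial norms \eqref{nk-sp} by their Bargmann counterparts \eqref{nk-spc}. Since the monomials $e_n(z)=z^n$ are pairwise orthogonal in both $\BDm$ and $\BDmm$, and since by \eqref{normefc} each norm is computed coefficientwise, $\norm{f}_{\nu,m}^2=\sum_{n\geq 0}|a_n|^2\norm{e_n}_{\nu,m}^2$ and likewise for the order $m+1$, it suffices to compare $\norm{e_n}_{\nu,m}^2$ with $\norm{e_n}_{\nu,m+1}^2$ for each fixed $n$ and to exhibit one positive constant bounding their ratio uniformly in $n$.

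First I would use \eqref{nk-spc} to evaluate the ratio $\norm{e_n}_{\nu,m+1}^2/\norm{e_n}_{\nu,m}^2$ in the three regimes dictated by the break-points of the piecewise formula. For $n<m$ both norms lie in the first branch and the ratio equals $1$. For $n=m$ the order-$(m+1)$ norm is still given by the first branch while the order-$m$ norm has already switched to the second, and a direct computation gives the ratio $1/(\nu^m m!)$. For $n\geq m+1$ both norms lie in the second branch and, after cancelling the common factor $(n!)^2$, the ratio collapses to $\nu(n-m)$.

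The key observation is then that each of these three expressions is bounded below by a positive quantity independent of $n$: the first is constant, the second is a fixed positive constant, and the third is increasing in $n$ with minimum $\nu$ attained at $n=m+1$. Hence
\[
\norm{e_n}_{\nu,m+1}^2 \ \geq\ \min\!\left(1,\ \frac{1}{\nu^m m!},\ \nu\right)\norm{e_n}_{\nu,m}^2
\]
for every $n$, so that with $C_{\nu,m}:=\max\!\left(1,\ \nu^m m!,\ 1/\nu\right)$, depending only on $\nu$ and $m$, one obtains $\norm{e_n}_{\nu,m}^2\leq C_{\nu,m}\norm{e_n}_{\nu,m+1}^2$. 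Multiplying by $|a_n|^2$ and summing over $n$ yields $\norm{f}_{\nu,m}^2\leq C_{\nu,m}\norm{f}_{\nu,m+1}^2$; in particular, finiteness of the right-hand side forces $f\in\BDm$, which gives simultaneously the inclusion $\BDmm\subset\BDm$ and the continuity of the embedding. The last assertion then follows by composing these embeddings down the scale, $\BDm\hookrightarrow\mathcal{B}^{2,\nu}_{m-1}(\C)\hookrightarrow\cdots\hookrightarrow\BDmo$, applying the inequality with $m$ replaced successively by $m-1,\dots,0$.

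I expect no genuine obstacle; the only point requiring care is the bookkeeping at the boundary index $n=m$, where the piecewise definition \eqref{nk-spc} switches branch for the two norms at different indices, and the verification that the linearly growing branch $\nu(n-m)$ does not spoil the uniform lower bound, its infimum over $n\geq m+1$ being the finite value $\nu$. Once these are handled, the constant $C_{\nu,m}$ is of the claimed form.
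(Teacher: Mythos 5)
Your proposal is correct and takes essentially the same approach as the paper: the paper's own proof is just the one-line remark that it is ``similar to the one given for Lemma \ref{lem-embedding}'', and you have carried out exactly that analogue, computing the three branch ratios $1$, $1/(\nu^{m}m!)$ and $\nu(n-m)$ (with infimum $\nu$ at $n=m+1$) and deducing the uniform constant $C_{\nu,m}=\max\left(1,\ \nu^{m}m!,\ 1/\nu\right)$. Nothing further is needed.
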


 \begin{proof}
 It is similar to the one given for Lemma \ref{lem-embedding}.
 \end{proof}

Thanks to the previous obtained results, one can proceed exactly as in the proof of Proposition \ref{Hilbert} and Lemma \ref{lem-evaluation} to show the  following

\begin{proposition} $\BDm $ is a Hilbert space and the monomials $e_{n}(z)=z^{n}; n\geq 0$ constitute an orthogonal basis of it.
Moreover, the evaluation map $\delta_{z}: f\mapsto f(z)$, for fixed $z\in \C $, is a continuous linear form  on $\BDm$ and satisfies
$$ |f(z)| \leq C_{z} \, \norm{f}_{\nu,m}$$
for every $f\in \BDm$, where $$C_{z}=\left(  \sum\limits_{n=0}^{+\infty}  \frac{\mid z^n \mid ^2}{\norm{e_{n}}_{\nu,m}^2}\right)^{\frac{1}{2}}   .$$
\end{proposition}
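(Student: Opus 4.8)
The plan is to transcribe, essentially word for word, the two arguments already carried out on the disk: the completeness-and-basis argument of Proposition \ref{Hilbert} and the Cauchy--Schwarz estimate of Lemma \ref{lem-evaluation}, substituting the Bargmann--Dirichlet norm for the weighted Bergman--Dirichlet one. The preceding lemma supplies all the ingredients: it identifies $\norm{f}_{\nu,m}^2$ with the weighted sum $\sum_{n\geq 0}|a_n|^2\norm{e_n}_{\nu,m}^2$ and records that the monomials are pairwise orthogonal with strictly positive norms.

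First I would establish that $\BDm$ is a Hilbert space. The map $f=\sum a_n z^n \mapsto (a_n\norm{e_n}_{\nu,m})_n$ is, by the previous lemma, an isometry of $\BDm$ onto the sequence space $\ell^2(\N)$, which is complete; hence so is $\BDm$. (Equivalently, one may inherit completeness from the continuous embedding into the complete Bargmann--Fock space $\BDmo$ proved in the previous lemma, exactly as in Proposition \ref{Hilbert}.) For the orthogonal basis claim, orthogonality is already known, so only totality of the system needs checking. Mimicking Proposition \ref{Hilbert}, if $f=\sum a_n z^n$ lies in the orthogonal complement of $\mathrm{Span}\{e_n\}$, then pairwise orthogonality gives $\scal{f,e_n}_{\nu,m}=a_n\norm{e_n}_{\nu,m}^2$ for each $n$; since every $\norm{e_n}_{\nu,m}^2>0$, all coefficients $a_n$ vanish and $f\equiv 0$, so $\overline{\mathrm{Span}\{e_n:n\geq 0\}}=\BDm$.

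For the evaluation functional I follow Lemma \ref{lem-evaluation}. Writing $|f(z)|\leq \sum_n (|z|^n/\norm{e_n}_{\nu,m})(|a_n|\norm{e_n}_{\nu,m})$ and applying Cauchy--Schwarz yields $|f(z)|\leq C_z\,\norm{f}_{\nu,m}$ with $C_z$ as stated, so that $\delta_z$ is a bounded linear form. The one point that genuinely differs from the disk case---and the only substantive verification---is that the defining series $C_z^2=\sum_n |z|^{2n}/\norm{e_n}_{\nu,m}^2$ must now converge for every $z\in\C$, not merely inside a disk of finite radius. Using $\norm{e_n}_{\nu,m}^2=\pi (n!)^2/(\nu^{n-m+1}(n-m)!)$ for $n\geq m$, the ratio of consecutive terms equals $|z|^2\nu(n-m+1)/(n+1)^2$, which tends to $0$ as $n\to\infty$; hence the series converges for every $z\in\C$ and $C_z<+\infty$. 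This is precisely the place where the super-geometric growth of the Gaussian weights $\norm{e_n}_{\nu,m}^2$ takes over the role played by the merely polynomial growth on the disk, and no other modification of the disk argument is required.
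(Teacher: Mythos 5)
Your proof is correct and follows essentially the same route as the paper, which simply invokes the arguments of Proposition \ref{Hilbert} and Lemma \ref{lem-evaluation} verbatim with the norm $\norm{\cdot}_{\nu,m}$ in place of $\norm{\cdot}_{\alpha,m,R}$. Your added ratio-test check that $C_z^2=\sum_n |z|^{2n}/\norm{e_n}_{\nu,m}^2$ converges for every $z\in\C$ is a worthwhile explicit verification of the one point where the plane genuinely differs from the disk, and it is computed correctly.
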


This shows that $\BDm$ is a reproducing kernel Hilbert space. Its reproducing kernel function
 is given explicitly in terms of the ${_2F_2}$-hypergeometric function (\cite[Chapter 5]{Rainville71}),
 $$ {_2F_2}\left( \begin{array}{c} a , b  \\ a' , b' \end{array}\bigg | x \right)
  = \sum\limits_{k=0}^{+\infty} \frac{(a)_k(b)_k}{(a')_k(b')_k} \frac{x^k}{k!}. $$
 More precisely, we assert

\begin{proposition}
The reproducing kernel of space $\BDm$ is given by
\begin{align}\label{RK-BDC} K_{m}^{\nu}(z,w)=\frac{\nu}{\pi}\left(
  \sum\limits_{n=0}^{m-1}\frac{\nu^n(z \overline{w})^{n}}{n!}+\frac{(z \overline{w})^{m}}{(m!)^2} \,
   {_2F_2}\left(\begin{array}{c} 1, 1\\ m+1,m+1 \end{array}\bigg |  \nu z \overline{w}\right)\right) .
   \end{align}
\end{proposition}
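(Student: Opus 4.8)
The plan is to reuse the computation scheme of Proposition~\ref{RK-mtext}. By the preceding proposition, $\BDm$ is a reproducing kernel Hilbert space and $(e_n)_{n\geq 0}$, $e_n(z)=z^n$, is an orthogonal basis of it; consequently its reproducing kernel is obtained by summing the normalized basis,
\[ K_m^{\nu}(z,w) = \sum_{n=0}^{+\infty} \frac{z^n\,\overline{w}^n}{\norm{e_n}_{\nu,m}^2}, \]
the series converging pointwise and locally uniformly because the evaluation functionals $\delta_z$ are continuous. The whole proof thus reduces to evaluating this sum in closed form, using the explicit norms \eqref{nk-spc}.

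First I would insert \eqref{nk-spc}, which naturally splits the sum at $n=m$. The head $0\leq n\leq m-1$ gives $\frac{\nu}{\pi}\sum_{n=0}^{m-1}\frac{\nu^n(z\overline{w})^n}{n!}$ after factoring $\nu/\pi$, which is precisely the polynomial part of \eqref{RK-BDC}. For the tail $n\geq m$ I would perform the change of index $p=n-m$ and pull out $(z\overline{w})^m$, reducing the remaining series to
\[ \frac{\nu}{\pi}\,(z\overline{w})^m \sum_{p=0}^{+\infty} \frac{\nu^{p}\,p!}{\big((p+m)!\big)^{2}}\,(z\overline{w})^{p}. \]

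The key step is the passage to Pochhammer symbols. Writing $(p+m)! = m!\,(m+1)_p$ and $p!=(1)_p$, the general coefficient becomes
\[ \frac{p!}{\big((p+m)!\big)^{2}} = \frac{1}{(m!)^2}\,\frac{(1)_p (1)_p}{\big((m+1)_p\big)^{2}\, p!}, \]
so that the tail sums to $\frac{\nu}{\pi}\,\frac{(z\overline{w})^m}{(m!)^2}$ times the ${_2F_2}$ appearing in \eqref{RK-BDC}. Combining head and tail yields the asserted formula. This is step-for-step the computation of Proposition~\ref{RK-mtext}, with the ${_3F_2}$ replaced by a ${_2F_2}$; the absence of the $(\alpha+2)_p$ numerator factor is exactly what the scaling $\alpha=\nu R^2\to\infty$ predicts.

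I expect no genuine obstacle: the argument is routine once the orthogonal basis and the norm formula \eqref{nk-spc} are available. The only points requiring care are the index shift together with the Pochhammer identities, and the remark that---unlike the ${_3F_2}$ of \eqref{expRK-ADm}, which converges only for $|z\overline{w}|<R^2$---the present ${_2F_2}$ has equally many numerator and denominator parameters and is therefore entire in $\nu z\overline{w}$. This reflects the fact that $\BDm$ lives on all of $\C$ and guarantees that $K_m^{\nu}(z,w)$ is well defined for all $z,w\in\C$.
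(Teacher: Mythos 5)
Your proposal is correct and follows essentially the same route as the paper: expand the kernel over the orthogonal basis $(e_n)$ with the norms \eqref{nk-spc}, split the sum at $n=m$, shift the index by $m$, and convert factorials to Pochhammer symbols to recognize the ${_2F_2}$. Your closing remark that the ${_2F_2}$ is entire (so the kernel is defined on all of $\C\times\C$) is a small, correct addition not made explicit in the paper.
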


\begin{proof}
We have
\begin{align*}
K_{m}^{\nu}(z,w) &=  \sum\limits_{n=0}^{+\infty} \frac{z^n \overline{w}^n } {\norm{e_{n}}_{\nu,m}^{2} } \\
& =\frac{\nu}{\pi }\left( \sum\limits_{n=0}^{m-1} \frac{\nu^n(z\overline{w})^{n} }{n!}+
 \sum\limits_{n=m}^{+\infty} \frac{(n-m)!\nu^{n-m}(z\overline{w})^{n}}{(n!)^2}\right)\\
&= \frac{\nu}{\pi }\left( \sum\limits_{n=0}^{m-1} \frac{\nu^n(z\overline{w})^{n} }{n!}+
\frac{(z\overline{w})^{m}}{\Gamma^{2}(m+1)} \sum\limits_{n=m}^{+\infty} \frac{\Gamma^{2}(m+1)\Gamma^{2}(n-m+1)}{\Gamma^{2}(n+1)}\frac{(\nu z\overline{w})^{n-m}}{(n-m)!}  \right)\\
&= \frac{\nu}{\pi}\left(  \sum\limits_{n=0}^{m-1}\frac{\nu^n (z \overline{w})^{n}}{n!}+\frac{(z \overline{w})^{m}}{(m!)^2} \,
   {_2F_2}\left(\begin{array}{c} 1, 1\\ m+1,m+1 \end{array}\bigg |  \nu z \overline{w}\right)\right).
\end{align*}
This completes the proof
\end{proof}

\begin{remark}
For $m=0$, we recover the reproducing kernel function of the Bargmann-Fock space which is known to be given by
  $$ K^{\nu}(z,w)=\left(\frac{\nu}{\pi}\right) e^{\nu z \overline{w}}.$$
\end{remark}

\section{Weighted Bargmann-Dirichlet spaces as limit of weighted Bergman-Dirichlet spaces}

The complex space $\C$ endowed with the flat  metric $ ds^2_\infty =
dz\otimes d{\bar z}$ can be seen as a K\"ahlerian manifold.
It is shown in \cite{GhIn2005JMP} that the flat hermitain geometry on $\C$
can be approximated by the complex hyperbolic geometry of the disks $\Dr$ of radius $R>0$
associated to the scaled Bergman K\"ahler metric
$$ ds^2_R = \frac{R^4 }{(R ^2-|z|^2)^2} dz \otimes d{\bar z}.$$
In fact, the holomorphic sectional curvature $\kappa_R$ of the complete K\"ahlerian
manifold $(\Dr,ds^2_R)$, which is known to be a negative constant, $\kappa_R=- 4/R^2$, tends to $0$, the flat curvature of
$(\C,ds^2_\infty)$. Moreover,  the measure $\muar$ on $\Dr$ is the one associated to the metric $ds^2_R$. It converges to
the volume measure associated to $ds^2_\infty$, when $R$ goes to $+\infty$, being indeed
$$  \lim_{R\rightarrow +\infty} \left( 1- \left|\frac{z}R\right|^2\right) ^{\nu R^2}  \mes = e^{-\nu \vert z\vert^{2}}  \mes.$$
Thus we have instead of general $\alpha >-1$, we consider the particular case of $\alpha =\nu R^2$ with $\nu>0$, so that
$$\displaystyle \lim\limits_{R\rightarrow +\infty} \muar = \nuar.$$

The main result of this section concerns the pointwise convergence of the reproducing kernel functions.

\begin{theorem} \label{thm-lim}
Let $K^{\alpha}_{R,m}$ with $\alpha = \nu R^{2}$ (resp. $K^{\nu}_{m}$) be the the reproducing kernel of the weighted Bergman-Dirichlet (resp. Bargmann-Dirichlet) space of order $m$. Then, we have
$$  \lim_{R\rightarrow +\infty} K^{\nu R^2}_{R,m}(z,w) = K^{\nu}_{m}(z,w)$$
for every fixed $(z,w) \in \C\times \C.$
\end{theorem}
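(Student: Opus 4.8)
The plan is to prove the convergence $K^{\nu R^2}_{R,m}(z,w) \to K^\nu_m(z,w)$ by comparing the two explicit reproducing kernel formulas term by term, using the substitution $\alpha = \nu R^2$. The most transparent route is to work directly with the series expansions (before they are packaged into hypergeometric notation), since these are just sums of monomials $(z\overline{w})^n$ with explicit coefficients. Writing $K^{\nu R^2}_{R,m}(z,w) = \sum_{n=0}^{+\infty} c_n(R)(z\overline{w})^n$ and $K^\nu_m(z,w) = \sum_{n=0}^{+\infty} d_n (z\overline{w})^n$, it suffices to show that each coefficient satisfies $c_n(R) \to d_n$ as $R \to +\infty$, and then to upgrade this coefficientwise convergence to uniform convergence on compact sets.

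First I would compute $c_n(R)$ from the formula in Proposition~\ref{RK-mtext}. For the tail part $n \geq m$, the coefficient of $(z\overline{w})^n$ is
$$
c_n(R) = \frac{1}{\pi R^2}\cdot\frac{\Gamma(n-m+1)\,\Gamma(\alpha+2+n-m)}{\Gamma^2(n+1)\,\Gamma(\alpha+1)}\cdot\frac{1}{R^{2(n-m)}},
$$
with $\alpha = \nu R^2$, and similarly the finite-sum part for $n < m$. The target coefficients $d_n$ are read off from \eqref{RK-BDC}. The key elementary fact I would isolate as the engine of the whole argument is the asymptotic ratio of Gamma functions: for fixed $k$,
$$
\lim_{R\to+\infty}\frac{\Gamma(\alpha+k)}{\Gamma(\alpha+1)\,\alpha^{k-1}} = 1,
\qquad \alpha = \nu R^2,
$$
which follows from $\Gamma(\alpha+k)/\Gamma(\alpha+1) = (\alpha+1)(\alpha+2)\cdots(\alpha+k-1) \sim \alpha^{k-1}$. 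Applying this with $k = n-m+2$ (so $\alpha^{k-1}=(\nu R^2)^{n-m+1}$) exactly cancels the powers of $R$ in the denominator and converts the Bergman-Dirichlet coefficient into the Bargmann-Dirichlet coefficient $(\nu/\pi)\cdot(n-m)!\,\nu^{n-m}/(n!)^2$; the factor $\alpha+1 = \nu R^2 + 1$ out front supplies the leading $\nu/\pi$ after dividing by $R^2$. The same computation with $k=n+1$ handles the indices $n<m$.

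To promote pointwise coefficient convergence to uniform convergence on a compact set $F \subset \C\times\C$, I would produce a convergent dominating series independent of $R$. On $F$ we have $|z\overline{w}| \leq \rho$ for some $\rho > 0$; it then remains to bound $c_n(R)$ uniformly in $R$ (for $R$ large) by a sequence $M_n$ with $\sum_n M_n \rho^n < +\infty$, so that the Weierstrass $M$-test, combined with termwise convergence, yields uniform convergence of the partial sums. The bound $|c_n(R)| \leq M_n$ is obtained from the same Gamma-function estimate, since the ratio stabilizes to its limit, giving $M_n$ of order $(n-m)!\,\nu^{n-m}/(n!)^2$ — and $\sum_n \tfrac{(n-m)!}{(n!)^2}\nu^{n-m}\rho^n$ converges by the ratio test for every $\rho$. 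This last domination is the main technical obstacle: one must check that the bounds on $c_n(R)$ hold uniformly for all sufficiently large $R$, not merely in the limit, which requires a slightly careful (but routine) monotonicity or one-sided estimate on the Gamma quotient $\Gamma(\nu R^2 + 2 + n - m)/[\Gamma(\nu R^2+1)(\nu R^2)^{n-m+1}]$. Once this uniform majorant is in hand, the interchange of limit and summation is justified and the theorem follows.
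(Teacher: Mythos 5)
Your proposal is correct and, at its core, follows the same strategy as the paper: compare the two explicit kernel expansions with $\alpha=\nu R^{2}$ and pass to the limit term by term. The difference is one of packaging. The paper plugs the formulas \eqref{expRK-ADm} and \eqref{RK-BDC} into Lemma \ref{LemLimitHyp}, a general confluence statement ${_{p+1}F_p}\to{_pF_p}$ whose rigorous proof is deferred to \cite{GhIn2005JMP}; all the analytic content of the theorem is hidden there. You instead prove exactly that content directly for the case at hand: the coefficientwise limit via $\Gamma(\alpha+k)/\Gamma(\alpha+1)\sim\alpha^{k-1}$, plus a dominated-convergence argument to interchange limit and sum. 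This makes your proof self-contained where the paper's is not, and the key point you flag — a one-sided, $R$-uniform bound on the Gamma quotient — is indeed available by monotonicity: writing $c_n(R)=d_n\prod_{j=1}^{n-m+1}\bigl(1+j/(\nu R^{2})\bigr)$ shows $c_n(R)$ is decreasing in $R$, so $\sup_{R\ge R_0}c_n(R)=c_n(R_0)$ serves as the majorant.

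Two small corrections. First, for the indices $n<m$ the relevant Gamma quotient is $\Gamma(\alpha+2+n)/\Gamma(\alpha+1)$, so you need $k=n+2$ (hence $\alpha^{n+1}$, cancelling $R^{2n+2}$), not $k=n+1$. Second, the majorant $M_n=c_n(R_0)$ is \emph{not} of order $(n-m)!\,\nu^{n-m}/(n!)^{2}$ uniformly in $n$: the product $\prod_{j=1}^{n-m+1}(1+j/(\nu R_0^{2}))$ grows with $n$ for fixed $R_0$, and the series $\sum_n M_n\rho^{n}$ converges only for $\rho<R_0^{2}$ (as it must, since $K^{\alpha}_{R_0,m}$ lives on $\Di_{R_0}$). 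This is harmless — for pointwise convergence at fixed $(z,w)$, and for uniform convergence on a compact set with $|z\overline{w}|\le\rho$, one simply chooses $R_0$ with $R_0^{2}>\rho$ first and then lets $R\ge R_0$ — but the choice of $R_0$ must be made to depend on the point or the compact set, which your write-up should state explicitly.
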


The proof of this theorem, follows by making use of the explicit expression of the reproducing kernels
$K^{\nu R^2}_{R,m}$ and  $K^{\nu}_{m}$ as given by \eqref{RK-mtext} and \eqref{RK-BDC}, respectively, combined with the following lemma

\begin{lemma}\label{LemLimitHyp}
 For every fixed $\xi\in\C$, we have
\begin{eqnarray}
\lim\limits_{\rho \longrightarrow +\infty} {_{p+1}F_p}\left(\begin{array}{c} a_1, \cdots, a_p, c+\rho \\ a_1', \cdots, a_p' \end{array}\bigg |  \frac{\xi}{\rho}\right)=
{_pF_p}\left(\begin{array}{c} a_1, \cdots, a_p \\ a_1', \cdots, a_p' \end{array}\bigg |  \xi \right),
 \label{limhyp}
\end{eqnarray}
where $a_j, a_j'$; $j=1, \cdots, p$, are complex numbers.
Moreover, the convergence is uniform on compact sets of $\C^n$
\end{lemma}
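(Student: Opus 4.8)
The plan is to compare the two ${}_pF_p$-type series coefficient by coefficient and to justify the interchange of the limit $\rho\to+\infty$ with the summation by a dominated–convergence (Tannery) argument. Expanding in Pochhammer symbols, the $k$-th term of the left-hand side is
$$\frac{(a_1)_k\cdots(a_p)_k\,(c+\rho)_k}{(a_1')_k\cdots(a_p')_k}\,\frac{1}{k!}\left(\frac{\xi}{\rho}\right)^k = \frac{(a_1)_k\cdots(a_p)_k}{(a_1')_k\cdots(a_p')_k}\,\frac{\xi^k}{k!}\cdot\frac{(c+\rho)_k}{\rho^k},$$
that is, exactly the $k$-th term of the right-hand ${}_pF_p$ multiplied by the factor $(c+\rho)_k/\rho^k$. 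Hence the whole argument reduces to controlling this single factor. First I would record the elementary identity
$$\frac{(c+\rho)_k}{\rho^k} = \prod_{j=0}^{k-1}\left(1 + \frac{c+j}{\rho}\right),$$
from which it is immediate that for each fixed $k$ this finite product tends to $1$ as $\rho\to+\infty$; this already gives termwise convergence of the left series to the right one.

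The substance of the proof is passing the limit through the \emph{infinite} sum. For this I would establish a uniform domination valid for all $\rho\ge\rho_0$: using $|c+\rho+j|\le |c|+\rho+j$ together with monotonicity in $\rho$, one obtains
$$\left|\frac{(c+\rho)_k}{\rho^k}\right| \le \prod_{j=0}^{k-1}\left(1 + \frac{|c|+j}{\rho_0}\right) = \frac{(|c|+\rho_0)_k}{\rho_0^k}.$$
Consequently, for $\xi$ ranging over a compact set $\mathcal{K}\subset\C$ with $M:=\sup_{\xi\in\mathcal{K}}|\xi|$, the modulus of the $k$-th term of the left series is bounded, uniformly in $\rho\ge\rho_0$ and in $\xi\in\mathcal{K}$, by
$$M_k := \left|\frac{(a_1)_k\cdots(a_p)_k}{(a_1')_k\cdots(a_p')_k}\right|\frac{M^k}{k!}\,\frac{(|c|+\rho_0)_k}{\rho_0^k}.$$
Choosing $\rho_0>M$ makes $\sum_k M_k$ a convergent ${}_{p+1}F_p$-type series, since its effective argument $M/\rho_0$ lies strictly inside the unit disc of convergence. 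Tannery's theorem then yields the pointwise limit \eqref{limhyp} for each fixed $\xi$.

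For the uniform statement on $\mathcal{K}$, I would split the difference of the two series into a finite head ($k<N$) and two tails. Each tail is bounded by $\sum_{k\ge N}M_k$, which is made smaller than any prescribed $\varepsilon$ by choosing $N$ large, \emph{independently} of $\xi\in\mathcal{K}$ and $\rho\ge\rho_0$. On the finite head, the $\rho$-dependence sits entirely in the factor $(c+\rho)_k/\rho^k-1$, which does not depend on $\xi$ and tends to $0$ uniformly over the finite index range $k<N$, while the remaining $\xi$-dependent coefficients are bounded on $\mathcal{K}$; thus the head can also be forced below $\varepsilon$ for $\rho$ large. Combining the two estimates gives the claimed uniform convergence on compact sets.

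The step I expect to be the main obstacle is precisely the domination in the second paragraph: the factor $(c+\rho)_k/\rho^k$ is \emph{not} bounded by $1$ (it exceeds it) and it grows with $k$, so no naive bound works. The key point is that for $\rho_0>M$ the surplus growth $(|c|+\rho_0)_k/\rho_0^k$ is exactly compensated by the decay of $M^k/k!$, keeping the dominating series inside its radius of convergence; securing $\rho_0>\sup_{\mathcal{K}}|\xi|$ is what legitimizes the interchange of limit and summation and makes it uniform.
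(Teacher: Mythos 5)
Your proof is correct. Note, however, that the paper does not actually supply an argument for this lemma: its ``proof'' consists of the remark that the identity ``can be checked easily in a formal way'' together with a pointer to \cite{GhIn2005JMP}, where the rigorous case $p=1$ is treated. Your write-up therefore fills in precisely the details the paper leaves out, and it does so by the natural route: termwise convergence via the factorization $(c+\rho)_k/\rho^k=\prod_{j=0}^{k-1}\bigl(1+(c+j)/\rho\bigr)$, followed by a Tannery/dominated-convergence interchange. Your identification of the real issue is also on target --- the factor $(c+\rho)_k/\rho^k$ grows in $k$, so the domination only works because choosing $\rho_0>\sup_{\mathcal K}|\xi|$ keeps the majorant series $\sum_k M_k$ (itself of ${}_{p+1}F_p$ type with argument $M/\rho_0<1$) convergent; this is exactly what makes both the pointwise and the locally uniform statements go through. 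Two cosmetic points you may as well record: the denominator parameters $a_j'$ must be assumed to avoid the nonpositive integers (otherwise neither side is defined), and the ``compact sets of $\C^n$'' in the statement should read compact sets of $\C$ (or of $\C\times\C$ once specialized to $\xi=z\overline w$ in Theorem \ref{thm-lim}), which is the setting your argument correctly addresses.
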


\begin{proof}
It can be checked easily in a formal way. For a rigorous proof, one can proceed exactly as in \cite{GhIn2005JMP} for $p=1$.
\end{proof}

 \begin{remark}  According to Lemma \ref{LemLimitHyp}, the convergence in Theorem \ref{thm-lim} of the reproducing kernel functions is
uniform in $z,w$ for $z,w$ in any compact set of $\C\times \C$.
\end{remark}

\noindent{\bf Acknowledegement:}
The assistance of the members of the seminars "Partial differential equations and spectral geometry" is gratefully acknowledged.
The second and the third authors are partially supported by the Hassan II Academy of Sciences and Technology.

\end{document}